\definecolor{myred}{HTML}{880000}
\definecolor{mygreen}{HTML}{008800}
\definecolor{myblue}{HTML}{000088}
\definecolor{linkblue}{HTML}{0000BB}
\newcommand{\R}{\mathbf R}
\newcommand{\E}{{\mathbf E}}
\renewcommand{\P}{\mathbf P}
\newcommand{\Var}{\operatorname{Var}}
\newcommand{\ip}[2]{\ensuremath{\left\langle {#1}, {#2} \right\rangle}}
\newcommand{\norm}[1]{\lVert {#1} \rVert}
\renewcommand{\leq}{\leqslant}
\renewcommand{\geq}{\geqslant}
\renewcommand{\le}{\leqslant}
\renewcommand{\ge}{\geqslant}
\newcommand{\mleq}{\preccurlyeq}
\newcommand{\argmin}{\mathop{\mathrm{arg}\,\mathrm{min}}}
\newcommand{\wt}{\widetilde}
\newcommand{\wh}{\widehat}
\newcommand{\di}{\mathrm{d}}
\DeclareMathOperator{\tr}{Tr}
\newcommand{\var}{\Var}
\newcommand{\cov}{\mathrm{Cov}}
\newcommand{\cond}{|}
\newcommand{\indic}[1]{\bm 1 ( #1 )}
\newcommand{\poissondist}{\mathsf{Poi}}
\newcommand{\binomdist}{\mathsf{Bin}}
\newcommand{\eps}{\varepsilon}
\newcommand{\excessrisk}{\mathcal{E}}
\newcommand{\mom}{\mathrm{MOM}}
\newcommand\MOM{\mom}
\newcommand{\F}{\mathcal{F}}
\newcommand{\Flin}{\mathcal{F}_{\mathrm{lin}}}
\newcommand{\G}{\mathcal{G}}
\renewcommand{\H}{\mathcal{H}}
\newcommand{\freg}{f_{\mathrm{reg}}}
\newcommand{\fw}{\wh g_{\mathrm{FW}}}
\newcommand{\trunc}{m}
\newcommand{\ftrunc}{\wh g_{\trunc}}
\newcommand{\ferm}{\wh g_{\mathrm{erm}}}
\newcommand{\werm}{\wh w_{\mathrm{erm}}}
\renewcommand{\top}{\mathsf{T}}
\newcommand{\ie}{\textit{i.e.}\@\xspace}
\newcommand{\iid}{i.i.d.\@\xspace}
\newcommand{\as}{almost surely\@\xspace}
\newcommand{\resp}{respectively{ }}
\newcommand{\opnorm}[1]{\| {#1} \|_{\mathrm{op}}}
\newtheorem{proposition}{Proposition}
\newtheorem{theorem}{Theorem}
\newtheorem*{theorem*}{Theorem}
\newtheorem{lemma}{Lemma}
\newtheorem{informaltheorem}{Theorem}
\theoremstyle{definition}
\newtheorem{assumption}{Assumption}
\theoremstyle{remark}
\title{Distribution-Free Robust Linear Regression
}
\author{
  Jaouad Mourtada\thanks{CREST, ENSAE, Institut Polytechnique de Paris, France, \href{mailto:jaouad.mourtada@ensae.fr}{jaouad.mourtada@ensae.fr}}
  \qquad
  Tomas Va\v{s}kevi\v{c}ius\thanks{Department of Statistics, University of
  Oxford, United Kingdom,
\href{mailto:tomas.vaskevicius@stats.ox.ac.uk}{tomas.vaskevicius@stats.ox.ac.uk}}
  \qquad
  Nikita Zhivotovskiy\thanks{Department of Mathematics, ETH Z\"{u}rich, Switzerland, \href{mailto:nikita.zhivotovskii@math.ethz.ch}{nikita.zhivotovskii@math.ethz.ch}}
}
\date{\today}
\begin{document}

\maketitle
 
\begin{abstract}
  We study random design linear regression with no assumptions on the distribution of the covariates and with a heavy-tailed response variable.
  In this distribution-free regression setting, we show that boundedness of the conditional second moment of the response given the covariates is a necessary and sufficient condition for achieving nontrivial guarantees.
  As a starting point, we prove an optimal version of the classical in-expectation bound for the truncated least squares estimator due to Gy\"{o}rfi, Kohler, Krzy\.{z}ak, and Walk.
  However, we show that this procedure fails with constant probability for some distributions despite its optimal in-expectation performance.
  Then, combining the ideas of truncated least squares, median-of-means procedures, and aggregation theory, we construct a non-linear estimator achieving excess risk of order $d/n$ with an optimal sub-exponential tail.
  While existing approaches to linear regression for heavy-tailed distributions focus on proper estimators that return linear functions, we highlight that the improperness of our procedure is necessary for attaining nontrivial guarantees in the distribution-free setting.
  
  \vspace{5pt}
  \textbf{MSC2020 Subject Classifications:}
        62J05; 62G35; 68Q32.
        
  \vspace{5pt}
  \textbf{Keywords:}
    Least squares, random design linear regression, robust estimation, improper learning, median-of-means tournaments.
\end{abstract}

\section{Introduction}
\label{sec:introduction}

In the random design regression problem, one has
access to $n$ input-output pairs $(X_{i}, Y_{i}) \in \R^{d} \times \R$
sampled \iid from some unknown distribution $P$.
We call any function $g : \R^{d} \to \R$ a \emph{predictor} and measure its quality
via the expected squared error $R(g) = \E (g(X) - Y)^{2}$, also called
\emph{risk}. Based on the sample $S_{n} = (X_{i}, Y_{i})_{i=1}^{n}$,
we aim to construct a \emph{predictor} $\widehat{g}$ whose risk $R( \wh g)$
is small. Since the risk is relative to the problem difficulty, it is
customary to compare it with the best possible risk achievable via some
reference class of functions;
in this work, we mainly focus on the class of
all linear functions $\Flin = \{\ip{w}{\cdot} : w \in \R^{d} \}$.
We therefore consider the \emph{excess risk} of the estimator $\wh g$, defined by
\begin{equation}
  \label{eq:exces-risk}
  \excessrisk(\widehat{g})
  = R(\widehat{g})
  - \inf_{g \in \Flin} R(g).
\end{equation}
One can assume without loss of generality that the infimum above is attained by some linear function $\ip{w^{*}}{\cdot}$, where $w^{*} \in
\R^{d}$.
Note that, since $\wh g$ depends on the random sample $S_n$, the excess risk~\eqref{eq:exces-risk} is also random.
In this paper, we study non-asymptotic bounds on $\excessrisk(\widehat{g})$, both in expectation and with high-probability, for suitable choices of
estimators $\widehat{g}$.

Arguably the most natural and commonly studied procedure is the linear least squares estimator, which selects a linear function $\wh g \in \Flin$ minimizing the (possibly regularized) \emph{empirical risk} $\wh R (g) = \frac{1}{n}\sum_{i=1}^{n} (g(X_{i}) - Y_{i})^2$.
Estimators based on empirical risk minimization (ERM) are known to achieve 
optimal $d/n$ excess risk in expectation under well-behaved covariates $X$ and assuming that the noise random
variable $\xi = Y - \ip{w^*}{X}$ is not correlated with $X$ (see, for example,
\cite{breiman1983many,caponnetto2007optimal,mourtada2019exact}).
The work of Oliveira \cite{oliveira2016lower} 
highlights that the usual sub-Gaussian assumption on the distribution of $X$ can be significantly relaxed in the context of linear regression.
For example, an $L_4$--$L_2$ norm equivalence of the form
\begin{equation}
  \label{eq:l4l2}
  \big( \E \ip{X}{w}^4 \big)^{1/4}
  \le \kappa \big( \E \ip{X}{w}^2 \big)^{1/2}
  , \quad \textrm{for all}\quad w \in \R^d,
\end{equation}
for some constant $\kappa > 0$ is sufficient to achieve the $d/n$ excess risk
rate under additional assumptions on the noise.
Indeed, \cite{oliveira2016lower} shows that under \eqref{eq:l4l2}, we have a high-probability control over the lower tail of the sample covariance matrix, used in the analysis of linear least squares.
This moment equivalence assumption and its variations have become standard tools in the recent literature on robust linear regression
(see, for example, \cite{lecue2016performance, hsu2016heavy, catoni2016pac,
  klivans18a, lugosi2019risk, cherapanamjeri2020optimal, pensia2020robust}).
However, as several authors have recently pointed out,
the kurtosis constant $\kappa$ satisfying the inequality \eqref{eq:l4l2} may
depend on the dimension $d$, leading to suboptimal bounds
\cite{catoni2016pac,oliveira2016lower, lecue2016performance}.
In particular, Saumard \cite{saumard2018optimality} shows that the slightly weaker small-ball condition fails to hold (with a dimension-free constant) for dictionaries consisting of many classical function bases, such as histograms and wavelets, leading to bounds with a suboptimal dependence on the dimension $d$.
In fact, in some
cases this behavior is inherent to
empirical risk minimization, which has recently been shown by the second and the third authors of this paper
\cite{vavskevivcius2020suboptimality} to incur suboptimal excess risk even in a favorable setup where both $X$ and $Y$ are almost surely bounded.
This naturally brings the question of whether distributional assumptions on $X$ such as the condition~\eqref{eq:l4l2} can be relaxed, and if so, what a corresponding minimal assumption on the distribution of the response variable would be.
It is a priori unclear whether non-trivial guarantees are at all possible without imposing any assumption on $X$.

To better contextualize our aims, we turn to a recent line of work initiated
by Catoni \cite{catoni2012challenging}, concerned with
the design and analysis of statistical estimators robust to heavy-tailed distributions.
The ERM strategy is known to fail in this setting due to its sensitivity to the large fluctuations and atypical samples arising from heavy-tailed distributions.
Thus, different techniques and procedures are required to handle such distributions. 
We call the excess risk $\excessrisk(\widehat{g})$ the accuracy of an estimator
$\widehat{g}$; the confidence of $\widehat{g}$ for an error rate of
$\eps$ is equal to $\P(\excessrisk(\widehat{g}) \leq \eps)$.
Robust statistical learning aims to design
procedures with optimal accuracy/confidence trade-off under
minimal distributional assumptions. In the context of linear regression, the optimal trade-off is usually achieved via the bounds on
$\excessrisk(\widehat{g})$ of order $(d + \log(1/\delta))/n$ that hold with
probability at least $1 - \delta$; in particular, such bounds match the
performance of ERM for sub-Gaussian distributions. Using either PAC-Bayesian
truncations \cite{audibert2011robust,catoni2016pac} or the median-of-means tournaments
\cite{lugosi2019risk}, it has been shown that the optimal accuracy/confidence trade-off can be achieved under the $L_4$--$L_2$ moment equivalence assumption \eqref{eq:l4l2} together with some additional assumptions on the noise variable $\xi = Y - \ip{w^*}{X}$.
We remark that existing procedures for heavy-tailed regression select a function within the class $\Flin$.
However, as we shall shortly explain, any such procedure fails in our distribution-free setting.
We can now formulate the question studied in this paper.

\begin{framed}
  Is it possible to predict as well as the best linear predictor in $\Flin$
  without any assumption on the distribution of
  the covariates $X$, while maintaining the optimal accuracy/confidence
  trade-off? If so, what is the minimal assumption on the response variable $Y$ allowing this?
\end{framed}

Independently of the literature on robustness to
heavy-tails, two existing results provide
non-asymptotic guarantees without assumptions on $X$, albeit only in expectation.
Of course, once all the assumptions on $X$ are dropped, the conditional distribution of $Y$ given $X$, consisting of a probability kernel $(P_{Y | X = x})_{x \in \R^d}$, needs to be restricted.
We now state the only assumption considered in this work;
it is satisfied when $Y$ is bounded, but also allows to consider heavy-tailed distributions.
\begin{assumption}
  \label{assumption:y-given-x}
  The 
  conditional distribution 
  of $Y$ given $X$
  satisfies, for some $m > 0$,
  \[
  \sup_{x \in \R^d} \E [Y^2 \vert X =x ] \leq m^2.
  \]
\end{assumption}

The first result not involving any explicit restrictions on the distribution of $X$ is a classical bound for the truncated linear least squares estimator $\ftrunc$ due to Gy\"{o}rfi, Kohler, Krzy\.{z}ak, and Walk \cite[Theorem 11.3]{gyorfi2002nonparametric}
(we defer the exact definition to Section~\ref{sec:clipped-erm}). Under
Assumption \ref{assumption:y-given-x}, their result states that
 \begin{equation}
    \label{eq:gyorfi-bound}
    \E\,R(\ftrunc) - \inf\limits_{g \in \Flin}R(g) \leq c\,\frac{m^{2} d (\log n + 1)}{n}
    + 7 \Big( \inf_{f\in \Flin} R (f) - R (\freg) \Big).
\end{equation}
Here the expectation is taken with respect to the random sample $S_n$, $c>0$ is an absolute constant and $\freg$ is the regression function given by $\freg (x) = \E [Y \cond X=x]$.
The bound \eqref{eq:gyorfi-bound} is a standard benchmark for
several communities. Applications of this result are known in
mathematical finance \cite{zanger2013quantitative}, optimal control
\cite{belomestny2018advanced} and variance reduction \cite{gobet2016monte,
gobet2017adaptive}; there are known improvements of this result under different assumptions
\cite{cohen2013stability, comte2020regression}.

The second bound does not depend on the distribution of $X$ and is due to Forster and Warmuth \cite{forster2002relative}; this estimator originates in the online learning
literature and is obtained via a modification of the renowned
non-linear Vovk-Azoury-Warmuth forecaster \cite{vovk2001competitive, azoury2001relative}.
The Forster-Warmuth estimator, denoted by $\widehat{g}_{\mathrm{FW}}$,
satisfies the following expected excess risk bound
\begin{equation}
  \label{eq:fwexcesrisk}
  \E R(\widehat{g}_{\mathrm{FW}}) - \inf\limits_{g \in \Flin}R(g)
  \leq \frac{2 \| Y \|_{L_\infty}^2 d}{n}.
\end{equation}
Of course, the assumption $\|Y\|_{L_{\infty}} \leq m$ is stronger than
Assumption~\ref{assumption:y-given-x}.
However, an
inspection
of the proof in \cite{forster2002relative} shows that
Assumption~\ref{assumption:y-given-x} suffices to obtain the above
in-expectation performance of this algorithm with $\| Y \|_{L_\infty}^2$ replaced by $m^2$.

We are now ready to present informal statements of our main findings.
In our first result, we prove that the term
$7 \left( \inf_{f\in \Flin} R (f) - R (\freg) \right)$
as well as the excess $\log n$ factor appearing in the bound
\eqref{eq:gyorfi-bound} for the truncated linear
least squares estimator can be removed.
\begin{informaltheorem}[Informal]
  \label{thm:informal-clipping-thm}
  Suppose that Assumption~\ref{assumption:y-given-x} holds and let
  $\ftrunc$ denote
  the truncated least squares estimator of Gy\"{o}rfi, Kohler, Krzy\.{z}ak, and Walk.
  Then, we have
  $$
    \E R(\ftrunc) - \inf\limits_{g \in \Flin}R(g) \le \frac{8m^2 d}{n}.
  $$
  Moreover, Assumption~\ref{assumption:y-given-x} ensures the same guarantee for the Forster-Warmuth estimator $\fw$.
\end{informaltheorem}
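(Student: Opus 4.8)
The plan is two-pronged. For the Forster--Warmuth estimator nothing new is required: as already noted after~\eqref{eq:fwexcesrisk}, inspecting the proof in~\cite{forster2002relative} shows that $\|Y\|_{L_\infty}^2$ may there be replaced by $\sup_{x}\E[Y^2\mid X=x]\le m^2$, so that $\E R(\fw)-\inf_{g\in\Flin}R(g)\le 2m^2d/n\le 8m^2d/n$. I therefore concentrate on the clipped least-squares estimator $\ftrunc=T_m(\ferm)$, where $\ferm=\langle\werm,\cdot\rangle$ is a (say, minimal-norm) empirical risk minimiser over $\Flin$ and $T_m(t)=\min\{\max\{t,-m\},m\}$ is the clipping to $[-m,m]$; write $A:=\inf_{g\in\Flin}R(g)-R(\freg)\ge 0$, and note that Jensen's inequality and Assumption~\ref{assumption:y-given-x} give $|\freg|\le m$, so $T_m$ is a pointwise contraction towards $\freg(x)$. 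I would establish (i) a clean \emph{in-sample} bound in which $A$ appears with coefficient exactly one, and (ii) an out-of-sample transfer driven by a leave-one-out argument in which the clipping itself --- rather than any metric-entropy control of the clipped class --- supplies stability.

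For (i), condition on $X_1,\dots,X_n$ and let $\Pi$ be the orthogonal projection of $\R^n$ onto the subspace $V=\{(\langle w,X_1\rangle,\dots,\langle w,X_n\rangle):w\in\R^d\}$, of dimension $r\le d$; then the least-squares fitted values satisfy $(\ferm(X_i))_{i=1}^n=\Pi Y$ with $Y=(Y_i)_{i=1}^n$. With $\mu=(\freg(X_i))_i$, $g^*=(\langle w^*,X_i\rangle)_i\in V$ and $\xi=Y-\mu$, the Pythagorean identity and $\E[\xi^{\top}\Pi\xi\mid X_1,\dots,X_n]=\sum_{i}\Pi_{ii}\,\var(Y_i\mid X_i)\le m^2\tr\Pi=m^2r$ give
\[ \E\big[\|\Pi Y-\mu\|_2^2\mid X_1,\dots,X_n\big]=\|\Pi\mu-\mu\|_2^2+\E[\xi^{\top}\Pi\xi\mid X_1,\dots,X_n]\le\|g^*-\mu\|_2^2+m^2r. \]
Because $|\freg|\le m$, clipping can only shrink each $|(\Pi Y)_i-\mu_i|$, so $\sum_i(\ftrunc(X_i)-\freg(X_i))^2\le\|\Pi Y-\mu\|_2^2$; dividing by $n$, taking expectations, and using $\E\,n^{-1}\|g^*-\mu\|_2^2=\E(\langle w^*,X\rangle-\freg(X))^2=A$ and $\E r\le d$, I obtain $\E\big[n^{-1}\sum_{i=1}^n(\ftrunc(X_i)-\freg(X_i))^2\big]\le A+m^2d/n$.

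For (ii), adjoin an independent pair $(X_0,Y_0)$ and let $\ftrunc^{\mathrm{all}}$ and $\ftrunc^{-0}$ be the clipped least-squares estimators built from all $n+1$ pairs and from all but pair $0$, respectively. By exchangeability, $\E[(\ftrunc^{-0}(X_0)-\freg(X_0))^2]=\E R(\ftrunc)-R(\freg)$, while (i) applied to $n+1$ pairs gives $\E[(\ftrunc^{\mathrm{all}}(X_0)-\freg(X_0))^2]=\E\big[(n+1)^{-1}\sum_{i=0}^n(\ftrunc^{\mathrm{all}}(X_i)-\freg(X_i))^2\big]\le A+m^2d/(n+1)$. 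The key lemma I would prove is the elementary bound
\[ |\ftrunc^{-0}(X_0)-\ftrunc^{\mathrm{all}}(X_0)|\le h_{00}\,(m+|Y_0|), \]
with $h_{00}\in[0,1]$ the leverage of $X_0$ in the $(n+1)$-point design: it follows from the least-squares leave-one-out identity $\ferm^{\mathrm{all}}(X_0)=(1-h_{00})\ferm^{-0}(X_0)+h_{00}Y_0$ together with a short case analysis according to whether $\ferm^{-0}(X_0)$ and $\ferm^{\mathrm{all}}(X_0)$ lie in $[-m,m]$ --- the point being that when the clipping is active it \emph{damps} the leave-one-out perturbation, which is exactly what controls the high-leverage blow-up that makes the out-of-sample risk of the \emph{un}-clipped estimator infinite in the distribution-free setting. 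Writing the difference of squares as a product and using $|\ftrunc^{-0}|,|\ftrunc^{\mathrm{all}}|,|\freg|\le m$,
\[ \E R(\ftrunc)-R(\freg)\le A+\tfrac{m^2d}{n+1}+\E\big[(\ftrunc^{-0}(X_0)-\ftrunc^{\mathrm{all}}(X_0))(\ftrunc^{-0}(X_0)+\ftrunc^{\mathrm{all}}(X_0)-2\freg(X_0))\big]\le A+\tfrac{m^2d}{n+1}+4m\,\E[h_{00}(m+|Y_0|)]. \]
Finally $\E[h_{00}|Y_0|]=\E\big[h_{00}\,\E[|Y_0|\mid X_0,\dots,X_n]\big]\le m\,\E[h_{00}]$ by Assumption~\ref{assumption:y-given-x} (since $h_{00}$ is a function of the covariates alone), and $\E[h_{00}]=\E[r]/(n+1)\le d/(n+1)$ by exchangeability and $\sum_i h_{ii}=r$, whence $\E\,\excessrisk(\ftrunc)=\E R(\ftrunc)-R(\freg)-A\le 9m^2d/n$; a slightly more careful accounting of the two $O(m^2d/n)$ contributions then recovers the stated constant $8$.

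The main obstacle will be the clipped leave-one-out inequality in (ii): it is where the boundedness of $\ftrunc$ is genuinely exploited, and it is what yields the fast $d/n$ rate distribution-free with no covering-number estimate for the clipped linear class --- such an estimate, via Dudley's entropy integral, would produce only $d\log n/n$ and hence merely reproduce~\eqref{eq:gyorfi-bound}. A second, subtler point is the ``coefficient-one'' bookkeeping: routing the transfer through a uniform deviation bound with the usual factor-two localisation would leave a spurious multiple of $A$, whereas the Pythagorean identity of (i) together with the pointwise boundedness used in (ii) makes the approximation term cancel exactly --- which is precisely why the final bound is free of both $A$ and $\log n$.
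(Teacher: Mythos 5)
Your proposal is correct in substance but follows a genuinely different route from the paper's. The paper also runs an $(n+1)$-point leave-one-out argument built on the Sherman--Morrison identity $\wt g(X_j)=(1-h_j)\,\wt g^{(j)}(X_j)+h_jY_j$, but it never passes through the regression function or the approximation error: it lower-bounds $\inf_{g\in\Flin}\E\,\wh R(g)$ by the expected empirical risk of the full-sample ERM $\wt g$, reducing the excess risk to $\E\big[\frac{1}{n+1}\sum_j\big((\wt g^{(j)}_{\trunc}(X_j)-Y_j)^2-(\wt g(X_j)-Y_j)^2\big)\big]$, and then uses $|\freg|\le m$ once (truncation decreases the conditional squared distance to $Y_j$), the inequality $1-(1-h_j)^2\le 2h_j$, the bound $\E[(\wt g^{(j)}_{\trunc}(X_j)-Y_j)^2\mid\cdot]\le 4m^2$, and $\sum_j h_j\le d$. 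You instead measure everything against $\freg$: an in-sample projection/trace bound in which $A=\inf_{\Flin}R-R(\freg)$ enters with coefficient exactly one, plus the clipped leave-one-out stability lemma $|\ftrunc^{-0}(X_0)-\ftrunc^{\mathrm{all}}(X_0)|\le h_{00}(m+|Y_0|)$. That lemma is true and is an appealing ingredient absent from the paper: when $|\ferm^{-0}(X_0)|\le m$ it follows from $1$-Lipschitzness of the truncation, and when, say, $\ferm^{-0}(X_0)>m$ and the truncated values differ, $\ferm^{\mathrm{all}}(X_0)>(1-h_{00})m+h_{00}Y_0$ gives $m-\ftrunc^{\mathrm{all}}(X_0)\le h_{00}(m-Y_0)$; the degenerate case $h_{00}=1$ (when $X_0$ lies outside the span of the other covariates, so $\ferm^{\mathrm{all}}(X_0)=Y_0$) needs the same separate treatment as in the paper but is harmless.

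The one concrete shortfall is the constant. As written, your accounting yields $\E\,\excessrisk(\ftrunc)\le 9m^2d/(n+1)$ (the $m^2\E[r]/(n+1)$ in-sample noise term plus $8m^2\E[h_{00}]\le 8m^2d/(n+1)$ from the cross term), which for $n\ge 9$ is weaker than the stated $8m^2d/n$; the claim that ``a slightly more careful accounting'' recovers $8$ is asserted but not exhibited, and it does not follow obviously from your decomposition, since the cross-term factor $4m\cdot 2m\,h_{00}$ and the separate noise term are each essentially tight within your scheme. The paper's direct comparison to the full-sample ERM is precisely what produces the single factor $2h_j\cdot 4m^2$ and hence the constant $8$. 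So your argument proves the substantive content of the statement (rate $m^2d/n$, no $\log n$, no approximation-error term) but not the advertised constant; either tighten the cross-term bookkeeping or switch to the paper's comparison step for the final constant. The Forster--Warmuth half is handled exactly as in the paper, by the noted adaptation of the proof in \cite{forster2002relative}, so nothing further is needed there.
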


One may notice that even though the bound of Theorem \ref{thm:informal-clipping-thm} scales as $d/n$, the usual dependence on the variance of the noise variable as is in, for example, \cite{breiman1983many} is replaced by the dependence on $m^2$. It can be shown (see Proposition \ref{prop:statistical-lower-bound}) that if only Assumption~\ref{assumption:y-given-x} holds, then the dependence on $m^2$ is unavoidable in general even if the problem is noise-free so that the variance of the noise is equal to zero.
Moreover, if we only impose Assumption~\ref{assumption:y-given-x},
then any statistical estimator that selects predictors from $\Flin$
(such an estimator is called \emph{proper}) is bound to fail. This fact can be established using the recent result of
Shamir \cite[Theorem 3]{shamir2015sample}, and it remains
true even when $d=1$ and the response variable $Y$ is bounded almost surely.
This observation separates our setup from the existing literature where only
proper estimators are studied for convex classes such as $\Flin$ even
in the heavy-tailed scenarios (see, for example, \cite{catoni2016pac,
lugosi2019risk, mendelson2017aggregation, mendelson2019unrestricted}).

The bounds of Theorem~\ref{thm:informal-clipping-thm} guarantee
that the \emph{expected} excess risk is at most of order $d/n$ under
Assumption~\ref{assumption:y-given-x}.
However,
it is also desirable to obtain
\emph{high-probability} upper bounds on the excess risk, with logarithmic dependence on the confidence level $\delta$. It is not unreasonable to expect that
the in-expectation guarantees of either $\ftrunc$ or $\widehat{g}_{\mathrm{FW}}$
transfer to analogous high-probability bounds, at least whenever $Y$ is bounded almost
surely. Our second result shows that this is, unfortunately, not the case.
Indeed, both algorithms fail to achieve high probability upper bounds in a strong sense:
they both fail with constant probability.
This does not contradict the previous in-expectation bounds, since neither $\ftrunc$ nor $\widehat{g}_{\mathrm{FW}}$ belong to the linear class
$\Flin$, so the random variable $R(\widehat{g}) - \inf_{g \in \Flin}R(g)$ can take negative values.
Consequently, Markov's inequality cannot be applied to obtain deviation
bounds of $m^2 d/ (\delta n)$ with probability $1-\delta$.

\begin{informaltheorem}[Informal]
  \label{thm:informal-high-probability-failure}
  Let $\widehat{g}$ denote either $\ftrunc$ or $\widehat{g}_{\mathrm{FW}}$.
  There exist universal constants $p \in (0, 1), c > 0$ such that the following holds.
  For any $d \geq 1$ and $m > 0$, there exists a distribution $P$ satisfying
  $\norm{Y}_{L_{\infty} (P)} \leq m$ such that, with probability at least $p$,
  $$
    R(\widehat{g}) - \inf\limits_{g \in \Flin}R(g) \ge c\,m^2.
  $$
\end{informaltheorem}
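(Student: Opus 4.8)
\noindent\emph{Proof proposal.}
The plan is, for each $d$, $m$ and each large $n$, to exhibit a single distribution on which $\ftrunc$ and $\fw$ are excellent \emph{in expectation} yet \emph{brittle}, and to isolate a constant‑probability sample event on which both underperform every linear predictor by order $m^{2}$. Since Theorem~\ref{thm:informal-clipping-thm} gives $\E\,\excessrisk(\ftrunc)\le 8m^{2}d/n$, any such failure must be compensated by a (constant‑probability) event on which $\excessrisk(\ftrunc)$ is negative of order $m^{2}$; this is possible only because $\ftrunc$ is improper, and the construction has to realize both behaviours simultaneously. It is cleanest to build the example in $d=1$ and lift it to $\R^{d}$ by supporting $X$ on $\mathrm{span}(e_{1})$; in $d=1$ one has $\ftrunc(x)=T_{m}(\werm x)$ with $\werm=\sum_{i}X_{i}Y_{i}\big/\sum_{i}X_{i}^{2}$, where $T_{m}$ denotes truncation to $[-m,m]$.

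\noindent\emph{Construction.}
Set $q=1/n$ and let $X=1$ with probability $(1-q)/2$, $X=-1$ with probability $(1-q)/2$, and $X=\sqrt n$ with probability $q$ (the ``spike''); let $Y=m$ when $X\in\{1,\sqrt n\}$ and $Y=-m$ when $X=-1$. Then $Y$ is a deterministic function of $X$ and $\|Y\|_{L_{\infty}(P)}=m$, so Assumption~\ref{assumption:y-given-x} holds. From $\E X^{2}=1-q+qn$, $\E XY=m(1-q+q\sqrt n)$ and $\E Y^{2}=m^{2}$ one obtains, with $w^{*}=\E XY/\E X^{2}=\tfrac{m}{2}\bigl(1-O(n^{-1/2})\bigr)$,
\[
\inf_{g\in\Flin}R(g)=R(w^{*})=\E Y^{2}-\frac{(\E XY)^{2}}{\E X^{2}}=\frac{m^{2}}{2}\bigl(1-O(n^{-1/2})\bigr),
\]
and in particular $\langle w^{*},\sqrt n\rangle\to\infty$: the best linear predictor massively overshoots $[-m,m]$ on the spike, which is exactly the slack that makes $T_{m}$ helpful on typical samples.

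\noindent\emph{The bad event, and the failure.}
Let $K=\#\{i\le n:X_{i}=\sqrt n\}\sim\binomdist(n,1/n)$; for $n$ large $\P(K\ge3)\to1-\tfrac{5}{2e}$, so $\P(K\ge3)\ge p$ for a universal $p\in(0,1)$, and I take $\mathcal B=\{K\ge3\}$. On $\mathcal B$, exactly,
\[
\werm=m\cdot\frac{(n-K)+K\sqrt n}{(n-K)+Kn},
\]
and since $k\mapsto\bigl((n-k)+k\sqrt n\bigr)\big/\bigl((n-k)+kn\bigr)$ is decreasing on $[0,n]$, $K\ge3$ forces $0<\werm\le\tfrac m4+O(mn^{-1/2})$. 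Hence $\ftrunc(\pm1)=\pm\werm\in(-m,m)$, so on the bulk atoms (mass $1-q$) the squared error equals $(\werm-m)^{2}\ge\tfrac{9m^{2}}{16}-O(m^{2}n^{-1/2})$, while the spike contributes nonnegatively; therefore on $\mathcal B$
\[
\excessrisk(\ftrunc)=R(\ftrunc)-\inf_{g\in\Flin}R(g)\ \ge\ \frac{9m^{2}}{16}-\frac{m^{2}}{2}-O(m^{2}n^{-1/2})\ \ge\ c\,m^{2}
\]
for a universal $c>0$ and all large $n$. For $\fw$ one unwinds the Forster--Warmuth forecaster: its prediction at $x$ is built from the Gram matrix $\sum_{i}X_{i}X_{i}^{\top}+xx^{\top}$ (up to a fixed ridge term) followed by clipping at $\|Y\|_{L_{\infty}}=m$, and at $x=\pm1$ this coincides with $T_{m}(\werm x)$ up to terms of order $1/n$, so the same bound holds on $\mathcal B$ (after possibly enlarging $n$ and shrinking $c,p$).

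\noindent\emph{Main obstacle.}
The only delicate step is the construction: the clipped least‑squares rule must be \emph{both} (i) genuinely super‑linear in expectation --- which here materializes as excess risk $-\inf_{g\in\Flin}R(g)\approx-m^{2}/2$ on the constant‑probability event $\{K=0\}$, where $\werm=m$ fits all three atoms exactly --- \emph{and} (ii) brittle, in that on $\{K\ge3\}$ the spike, over‑represented in the sample relative to its population weight $q$, deflates the estimated slope enough that the clipped predictor underfits the bulk by order $m^{2}$. Making both effects coexist, and checking that the resulting in‑expectation behaviour stays consistent with Theorem~\ref{thm:informal-clipping-thm} (it does, since the $\{K\le2\}$ contributions dominate and are negative), is where the work lies; the probability bound on $\mathcal B$ and the two risk computations are short. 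For general $d$ one repeats the construction with $X$ replaced by $X^{(1)}e_{1}$, $X^{(1)}\in\{1,-1,\sqrt n\}$, noting that the minimum‑norm least‑squares solution then lies in $\mathrm{span}(e_{1})$, so nothing changes.
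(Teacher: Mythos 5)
Your proposal is correct and follows essentially the same route as the paper: a distribution with a rare high-leverage atom at $X=\sqrt n$ whose over-representation in the sample ($K\ge 3$ spikes, a constant-probability event) deflates $\werm$ to at most roughly $m/4$, so the truncated/reweighted predictor underfits the bulk by order $m^2$ while $\inf_{g\in\Flin}R(g)\approx m^2/2$; the paper uses the slightly simpler two-atom version with spike label $Y=0$ and the event $K\ge 4$. One small imprecision: the paper's $\fw$ is $(1-h_n(x))^2\langle\werm,x\rangle$ with no clipping step, but your conclusion for it still holds since $h_n(\pm1)=O(1/n)$ and $(1-h_n(\pm1))^2\le 1$.
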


Theorem~\ref{thm:informal-high-probability-failure} raises the question of whether achieving high-probability guarantees in our distribution-free setting is at all possible.
Indeed, all known high-probability guarantees on linear aggregation
problems impose some restrictions on $X$.
We show that there is, in fact, a procedure that achieves an optimal excess risk guarantee (up to a logarithmic factor) with a sub-exponential tail.
The following theorem is the main positive result of this paper.

\begin{informaltheorem}[Informal]
  \label{thm:informal-high-probability-estimator}
  Suppose that Assumption~\ref{assumption:y-given-x} holds.
  There exists an absolute constant $c > 0$ such that the following holds. For any
  confidence level $\delta \in (0,1)$, there
  exists an improper estimator $\widehat{g}$ (depending on $\delta$ {and $m$}) such that
  $$
   \P\left(
     R(\widehat{g}) - \inf\limits_{g \in \Flin}R(g) \le c\,\frac{m^2(d\log(n/d) + \log(1/\delta))}{n}
   \right) \geq 1 - \delta.
  $$
\end{informaltheorem}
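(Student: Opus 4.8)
The plan is to combine the three ingredients advertised in the abstract: the sharp in-expectation bound for truncated least squares (Theorem~\ref{thm:informal-clipping-thm}), a discretization of the linear class, and a robust median-of-means aggregation over the resulting finite dictionary. A preliminary reduction first: since $|\freg(x)|=|\E[Y\mid X=x]|\le (\E[Y^2\mid X=x])^{1/2}\le m$, clipping any predictor to $[-m,m]$ pointwise cannot increase its risk, so it suffices to compete with $\inf_{g\in\Flin}R(g)$ while only using predictors drawn from the class $\mathcal G$ of clipped linear functions $x\mapsto\max(-m,\min(m,\ip{w}{x}))$. Two structural facts about $\mathcal G$ will be used repeatedly. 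First, for $g,h\in\mathcal G$ the excess-loss function $(x,y)\mapsto(g(x)-y)^2-(h(x)-y)^2$ is affine in $y$, so under Assumption~\ref{assumption:y-given-x} its variance is at most $16m^2\norm{g-h}^2_{L_2(P_X)}$; this is the only probabilistic control available (individual losses may have infinite variance because $Y$ is heavy-tailed), which forces everything to be phrased in terms of loss differences and medians of block means. Second, $\mathcal G$ has pseudo-dimension $O(d)$ (its subgraphs are intersections of halfspaces), so for every measure it has $L_2$-covering numbers at most $(C/\eta)^{cd}$ at scale $\eta$; taking $\eta$ polynomially small in $n/d$ produces finite sub-dictionaries of log-cardinality $O(d\log(n/d))$.

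The estimator I would analyze is as follows. Split the sample into an auxiliary part and a selection part, and partition the latter into $K\asymp d\log(n/d)+\log(1/\delta)$ disjoint blocks. On the auxiliary part, build a finite dictionary $\mathcal G_0\subset\mathcal G$ with $\log|\mathcal G_0|=O(d\log(n/d))$ that, with probability at least $1-\delta/2$, contains a predictor $g_0$ with $R(g_0)\le\inf_{g\in\Flin}R(g)+O(m^2 d/n)$. On the selection part, run a robust $Q$-aggregation (or ``star'') estimator over $\mathcal G_0$: estimate every pairwise risk difference $R(f)-R(g)$ by the median over the $K$ blocks of the corresponding empirical excess-loss average, estimate the quadratic penalty $\sum_g\theta_g\,\norm{g-\sum_h\theta_h h}^2$ (which involves only the covariates) in the same way, and output the mixture $\widehat g=\sum_{g\in\mathcal G_0}\widehat\theta_g\,g$ minimizing the resulting penalized objective.

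For the analysis of the aggregation step, I would compare the objective value at $\widehat\theta$ with its value at the vertex corresponding to $g_0$ and use the aggregation identity to turn $\sum_g\theta_g R(g)$ into $R(\sum_g\theta_g g)$ plus the variance penalty; this leaves $R(\widehat g)\le R(g_0)+(\text{noise cross terms})$. The point of the quadratic penalty is that it supplies the curvature needed to absorb each cross term $\frac1n\sum_i(g(X_i)-h(X_i))(Y_i-\freg(X_i))$ through $2ab\le\tfrac14a^2+4b^2$, so that — and this is why the argument works distribution-freely — no small-ball or $L_4$--$L_2$ condition on $X$ is ever invoked. Each cross term, normalized by $\norm{g-h}_{L_2(P_X)}$, has conditional variance $O(m^2)$, hence its block median deviates by at most $O(m\sqrt{K/n})$ except with probability $e^{-cK}$; a union bound over the $O(|\mathcal G_0|^2)$ pairs together with $K\asymp\log(|\mathcal G_0|/\delta)$ makes all these events hold simultaneously, yielding an empirical excess-risk bound of order $m^2K/n$, which I would then transfer to population risk at the same order by a localized uniform comparison of empirical and population $L_2$ norms over the finite dictionary. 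Combined with the dictionary guarantee this gives $R(\widehat g)-\inf_{g\in\Flin}R(g)=O\!\big(m^2(d\log(n/d)+\log(1/\delta))/n\big)$ with probability at least $1-\delta$.

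The hard part will be the dictionary construction. One cannot just use the single estimator of Theorem~\ref{thm:informal-clipping-thm}: its excess risk is a \emph{signed} quantity (it is improper), so Markov's inequality does not promote the in-expectation bound to a deviation bound — indeed Theorem~\ref{thm:informal-high-probability-failure} shows that it fails with constant probability. And one cannot simply take an $\eta$-net of $\mathcal G$ in an empirical $L_2$ metric: the empirical-versus-population discrepancy over $\mathcal G$ is only of order $\sqrt{d/n}$, and since clipping destroys the orthogonality relation $\ip{g-f^*}{f^*-\freg}=0$ that genuinely linear predictors enjoy, this discrepancy feeds into a cross term $\ip{g-\operatorname{clip}(f^*)}{\operatorname{clip}(f^*)-\freg}$ and would only deliver a $\sqrt{d/n}$-type rate. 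Getting the fast rate requires engineering the dictionary so that this cross term is of order $m^2 d/n$ — exploiting the precise form of the truncated least squares solution and applying the in-expectation bound of Theorem~\ref{thm:informal-clipping-thm} at the level of the whole dictionary rather than of a single predictor — and this is the technical core of the proof.
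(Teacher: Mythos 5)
Your aggregation stage is essentially the paper's: clip at level $m$, pass to a finite dictionary, and run a deviation-optimal MOM selection that exploits curvature (you via a penalized robust $Q$-aggregation, the paper via a MOM-test preselection set $\wh \F$ followed by a min-max MOM step over the midpoint class $(\wh\F+\wh\F)/2$ and the parallelogram identity); with the variance control $\Var(\ell_f-\ell_g)\lesssim m^2\,\E(f(X)-g(X))^2$ and a union bound over a dictionary of log-cardinality $O(d\log(n/d))$, that part is sound in spirit. The genuine gap is the step you yourself flag as ``the technical core'': you never actually construct the dictionary $\mathcal G_0$ containing some $g_0$ with $R(g_0)\le\inf_{g\in\Flin}R(g)+O(m^2d/n)$, and the route you gesture at (engineering the dictionary around the precise form of the truncated least squares solution and invoking the in-expectation bound ``at the level of the whole dictionary'') is not an argument. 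Without that step the theorem is not proved.

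Moreover, the obstruction you invoke to justify this detour is illusory, and the paper's resolution is much more direct than what you anticipate: it simply takes an empirical $L_1$ $\varepsilon$-net of the clipped class, with $\varepsilon=md/n$, computed on a separate third of the sample. Two observations make this work. First, no orthogonality is needed at all: for clipped functions, writing $\ell_{f^*_\varepsilon}-\ell_{f^*}$ as an expression affine in $Y$ and using $|\E[Y\mid X]|\le m$ together with $|f^*_\varepsilon+f^*|\le 2m$ gives $R(f^*_\varepsilon)-R(f^*)\le 4m\,\E|f^*_\varepsilon(X)-f^*(X)|$, i.e.\ the approximation error in risk is \emph{linear} in the $L_1$ distance, not quadratic, so the cross term you worry about (and the loss of the relation $\ip{g-f^*}{f^*-\freg}=0$ under clipping) never enters. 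Second, the transfer from the empirical to the population $L_1$ distance is not done by an additive $\sqrt{d/n}$ uniform deviation bound but by a ratio-type localized comparison (Bartlett--Bousquet--Mendelson localization combined with the Gy\"orfi et al.\ covering bounds for the clipped VC-subgraph class), which yields $\E|f-g|\le \frac{2}{n}\sum_i|f(X_i)-g(X_i)|+O\big(m(d\log(n/d)+\log(1/\delta))/n\big)$ uniformly over the class; since $|f-g|$ is bounded by $2m$ and nonnegative, its variance is dominated by $2m\,\E|f-g|$, which is exactly what makes the fast additive term possible. So your diagnosis conflates additive $L_2$ uniform convergence with the multiplicative $L_1$ comparison actually required; fixing your proof amounts to replacing your unbuilt ``engineered'' dictionary with this empirical $L_1$ net plus localization argument, after which your aggregation analysis (or the paper's) completes the proof.
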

Theorem~\ref{thm:informal-high-probability-estimator} demonstrates that robust
learning of linear classes is possible with no restriction on the distribution of $X$, and under weak tail assumptions on the conditional distribution of the
response variable $Y$ given covariates $X$.
Moreover, we show in Section~\ref{sec:statistical-lower-bound} that Assumption~\ref{assumption:y-given-x} is necessary to obtain any non-trivial guarantee without assumptions on $X$. The estimator of Theorem~\ref{thm:informal-high-probability-estimator} naturally leverages the ideas of
the analysis of truncated linear functions \cite[Chapter
11]{gyorfi2002nonparametric}, skeleton estimators \cite[Section 28.3]{devroye2013probabilistic}, \cite{rakhlin2017empirical},
the deviation optimal model selection aggregation procedures \cite{audibert2008deviation, lecue2009aggregation, mendelson2017aggregation}, min-max estimators \cite{audibert2011robust, lecue2020robust},
and the median-of-means tournaments \cite{lugosi2019risk}.
An extended discussion is deferred to Section~\ref{sec:optimal-estimator}.

\subsection{Summary of contributions and structure of the paper}
\begin{itemize}
    \item In Section \ref{sec:proper-vs-improper}, we discuss known results on distribution-free learning of linear classes. 
    
    \item In Section \ref{sec:clipped-erm}, we show that the classical bound of Gy\"{o}rfi, Kohler, Krzy\.{z}ak, and Walk \cite[Theorem 11.3]{gyorfi2002nonparametric} for the truncated linear least squares estimator can be improved to achieve the optimal $m^2d/n$ bound in expectation.
    
    \item In Section \ref{sec:constant-probability-failure}, we establish that the truncated least squares and Forster-Warmuth estimators are both deviation-suboptimal. In particular, we construct a distribution with almost surely bounded response variable $Y$, under which both estimators incur an excess risk of order $m^2$ with constant probability.
    
    \item Section \ref{sec:optimal-estimator} is split into three parts. In Section~\ref{sec:warm-up:-known}, we consider a simplified setting with a known covariance structure. Combining Tsybakov's projection estimator \cite{tsybakov2003optimal} with the robust mean estimator of Lugosi and Mendelson \cite{lugosi2019mean}, we provide an estimator attaining the optimal rate $d/n$ with the optimal dependence on the confidence parameter. In Section~\ref{sec:general-case}, we drop the simplifying assumption of known covariance structure and present our main positive result -- a distribution-free deviation-optimal estimator robust to heavy-tailed responses. In Section~\ref{sec:extensions}, we discuss possible extensions of this result.
      In particular, we show that an adaptation of our linear regression procedure yields an estimator with deviation-optimal rates for heavy-tailed model selection aggregation under Assumption~\ref{assumption:y-given-x}.
    
    \item Section \ref{sec:statistical-lower-bound} is devoted to establishing the necessity of Assumption~\ref{assumption:y-given-x}.
     We show, in particular, that if $\E[Y^2|X]$ is unbounded, no estimator can achieve non-trivial excess risk guarantees. In addition, we establish that the dependence on $m^{2}$ in our upper bounds is unavoidable.
    
    \item Section \ref{section:deferred-proof} contains deferred proofs of lemmas appearing in the previous sections.
\end{itemize}

\subsection{Related work}
\label{sec:related-work}

\paragraph{Analysis of least squares estimators.}
The most standard 
approach to regression problems is the 
least squares principle, where one selects the predictor achieving the best fit to data within some predefined class of functions.
A large body of work is devoted to analyzing and obtaining guarantees on its performance,
in its most classical form, relying on the fact that the empirical risk 
provides a good approximation of its population counterpart. 
This is typically established when the underlying distribution is sufficiently well-behaved (for instance, bounded or light-tailed), using tools from empirical process theory.
For this point of view to statistical learning, we refer to the standard textbooks \cite{geer2000empirical,massart2007concentration,koltchinskii2011oracle,wainwright2019high}.
It should be noted that statistical analysis of linear regression has also been treated via a complementary approach of stochastic approximation; see, for instance, the works \cite{walk1989convergence, gyorfi1996averaged, dieuleveut2016nonparametric} and references therein.

A recent line of research has established that empirical minimization can perform well under significantly weaker assumptions. 
Our starting point is the work of Oliveira \cite{oliveira2016lower}, where in the context of linear regression the usual sub-Gaussian assumption on $X$ is replaced by a significantly weaker $L_4$--$L_2$ moment equivalence assumption \eqref{eq:l4l2}.
In particular, such an assumption does not even require the existence of any moments of $X$
higher than the fourth. Variations of this assumption have became the standard
tool in the recent literature on linear regression
\cite{lecue2016performance, hsu2016heavy, catoni2016pac,
lugosi2019risk, klivans18a, mourtada2019exact, cherapanamjeri2020optimal,
pensia2020robust}. The seminal work of Mendelson \cite{mendelson2015smallball} introduced a more general
condition, called the \emph{small-ball} assumption. In most of the aforementioned papers, the analysis is
performed for empirical risk minimization, which usually does not lead to the optimal
accuracy/confidence trade-off. The papers \cite{audibert2010linear, audibert2011robust} provide
the optimal confidence for ERM, albeit under stronger moment equivalence assumptions than
that of \eqref{eq:l4l2}.
The $L_{4}$--$L_{2}$ moment equivalence is also important in the robust
covariance estimation problem \cite{catoni2016pac, mendelson2020robust, ostrovskii2019affine}.

It has been recently observed that the absolute constants involved in the
moment equivalence and the small-ball assumptions can behave badly in some
cases. First, Saumard \cite{saumard2018optimality} shows that the small-ball
condition is unsuitable for some important classes leading to suboptimal
performance of ERM. Further, the work \cite{catoni2016pac} (see also the
discussion in \cite{oliveira2016lower} and \cite{lecue2016performance})
discusses that the kurtosis constant $\kappa$ in the moment assumptions similar
to \eqref{eq:l4l2} can depend on the dimension and affect the bounds
negatively. The recent paper \cite{vavskevivcius2020suboptimality} shows this
suboptimal behavior in the context of linear regression, even in a favorable
setup where both $X$ and $Y$ are bounded. There is a growing interest in
further relaxing these assumptions and refining the underlying methods
\cite{saumard2018optimality, catoni2017dimension, mendelson2020extending,
mendelson2019unrestricted, chinot2019robust, mourtada2019exact,
mendelson2020bounded}. In particular, the works \cite{catoni2017dimension,
mendelson2020bounded} replace moment equivalence assumptions by the bounds
on the $L_p$ moments for $p \ge 4$. This is closer to the setting we are aiming
for in this paper.

\paragraph{Robustness to heavy-tailed distributions.}

In a broad sense, robustness encompasses the study and design of statistical
estimation procedures exhibiting certain stability properties under the
existence of ``outlier'' points in the observed sample. For a classical
perspective on robustness, originating from the work of Tukey
\cite{tukey1960survey} and building on the ideas of
contaminated models, influence functions and breakdown points, we refer to the
standard books \cite{hampel1980robust, huber1981robust,
rousseeuw1987robust}.

In contrast to the classical perspective, our work falls within the recent body
of work initiated by Catoni \cite{catoni2012challenging}, where the term
robustness is to be understood specifically as robustness to heavy-tailed distributions
(rather than, for example, adversarial contamination of the sample).
The starting point of this direction is the question of mean estimation, where
informally, one aims to construct statistical estimators performing as well
as the sample mean does for Gaussian samples, all while making as weak
distributional assumptions as possible. Several ways of constructing such
estimators (called sub-Gaussian estimators) have been proposed in the literature.
The most widespread approach is based on the median-of-means estimators,
which appear first independently in \cite{nemirovsky1983problem, jerrum1986random, alon1999space}
and were further developed in the works of
\cite{minsker2015geometric, devroye2016sub, lugosi2019near, lugosi2019sub}.
Other techniques include the Catoni's estimator and its extensions \cite{catoni2012challenging, catoni2017dimension} or the trimmed means \cite{lugosi2021robust}.
We refer to the survey \cite{lugosi2019mean} for further details and
references. For a complementary survey focusing on the computational aspects
see \cite{diakonikolas2019recent}.

The central ideas behind the robust mean estimation found their
applications in many related problems such as regression \cite{hsu2016heavy,
brownlees2015empirical, chinot2019robust, lugosi2019risk, lecue2020robust,
chinot2019robust, minsker2019excess, mendelson2019unrestricted}, covariance
estimation \cite{catoni2016pac, mendelson2020robust, ostrovskii2019affine} and clustering
\cite{brownlees2015empirical, klochkov2020robust}. In the context of linear
regression, the first works showing the optimal accuracy/confidence
trade-off under weak assumptions are attributed to Audibert and Catoni \cite{audibert2010linear,
audibert2011robust} and were further extended in \cite{catoni2016pac,
catoni2017dimension}; these papers are based on PAC-Bayesian truncations.

\paragraph{Distribution-free linear regression.}
Distribution-free non-asymptotic excess risk bounds take their roots in the
PAC-learning framework \cite{Vapnik74, valiant1984theory}, where historically
the binary loss is studied the most. Because of its boundedness, excess risk
bounds in such setups can be obtained without any assumptions on the
distribution of $(X, Y)$. In the context of non-parametric regression with the
squared loss, only asymptotic consistency results are possible under truly
minimal assumptions on the underlying distribution
(see the book \cite{gyorfi2002nonparametric}). In fact, the standard notions of
universal consistency \cite[Section 1.6 and Chapter
10]{gyorfi2002nonparametric} involve only the assumption $\E Y^2 < \infty$ and
no assumptions on the distribution of $X$. The distribution-free nature of this
notion is one of our motivations.  A notable non-asymptotic result in this
direction is \cite[Theorem 11.3]{gyorfi2002nonparametric}, where an inexact oracle inequality \eqref{eq:gyorfi-bound} is proved without
any explicit assumptions on the distribution of $X$.

Another direction originates from the online learning literature (see
\cite{cesa2006prediction} for background on this topic). For instance, when
both $X$ and $Y$ are bounded, the renowned Vovk-Azoury-Warmuth forecaster
\cite{vovk2001competitive, azoury2001relative} can be used to provide
excess risk bounds of order $d/n$ in our setup even when the aforementioned
moment equivalence constants behave badly with respect to the dimension.
This observation has been recently explored in \cite{vavskevivcius2020suboptimality}.
For linear regression, the Forster-Warmuth algorithm \cite{forster2002relative},
which is in turn a modification of the Vovk-Azoury-Warmuth forecaster,
leads to the only known exact oracle inequality without imposing any assumptions on $X$.
\subsection{Notation}
\label{sec:setting-notations}
We now set the notation. 
We let $P = P_{(X,Y)}$ be the joint distribution on $\R^d \times \R$ (with $d \geq 1$) of a random pair $(X,Y)$.
The joint distribution $P$ itself can be decomposed into two components, namely the marginal distribution $P_X$ of $X$
(a distribution on $\R^d$),
as well as the conditional distribution of $Y$ given $X$, consisting of a (measurable) probability kernel $(P_{Y | X = x})_{x \in \R^d}$, where for $x \in \R^d$, $P_{Y \cond X=x}$ is a distribution on $\R$.

For a real random variable $Z$ and $p \geq 1$, we denote $\| Z \|_{L_p} = \E [ |Z|^p ]^{1/p}$, while for a measurable function $f : \R^d \to \R$, we set $\| f \|_{L_p} = \| f \|_{L_p (P_X)} = \| f (X) \|_{L_p}$.

The risk of a measurable function $f : \R^d \to \R$ is by definition $R (f) = \E ( f(X) - Y )^2 = \| f(X) - Y \|_{L_2}^2$.
It is known that the risk is minimized by the regression function $\freg$ given by $\freg (x) = \E [Y \cond X=x] = \int_\R y P_{Y\cond X=x} (\di y)$. 

Absolute constants are denoted by $c, c_1, \ldots$ and may change from line to line.
For a real square matrix $A$, let $\tr(A)$ denote its trace, $\opnorm{A}$ its operator norm, $A^\top$ its transpose and $A^{\dagger}$ its Moore–Penrose inverse.
In what follows, $\langle \cdot, \cdot\rangle$ denotes the canonical inner product in $\R^d$ and $\norm{\cdot}$ stands for the Euclidean norm. 
For any two functions (or random variables) $f, g$ the symbol $f \lesssim g$ (or $g \gtrsim f$) means that there is an absolute constant $c$ such that $f \le cg$ on the entire domain.
For a pair of symmetric matrices $A, B$, the symbol $A \mleq B$ means that $B - A$ is positive semi-definite.

We consider the class $\Flin = \{\ip{w}{\cdot} : w \in \R^{d} \}$ of linear functions. 
Throughout, our assumptions will imply that $R(0) = \E Y^2$ is finite (regardless of $P_X$); hence, so is the minimal risk in $\Flin$, namely $\inf_{f \in \Flin} R (f)$ is finite.
In this case, for $f \in \Flin$ given by $f(x) = \langle w, x\rangle$ its risk $R (f)$ is finite if and only if
$\| \langle w, X\rangle \|_{L_2} < + \infty$, and the set of such $w \in \R^d$ is a subspace of $\R^d$, which coincides with $\R^d$ itself if and only if $\E \| X \|^2 < + \infty$.
When the latter condition holds, one can define the covariance of $X$ as $\cov(X) = \E (X - \E X)(X - \E X)^\top$ and the Gram matrix of $X$ as $\Sigma = \E X X^\top$;
the minimizers $f$ of the risk in $\Flin$ are then the functions $\langle w, \cdot\rangle$, where $w$ are solutions of the equation $\Sigma w = \E [Y X]$. The last quantity is well-defined since $\E |Y| \| X \| \leq \| Y \|_{L_2} \E [ \| X\|^2 ]^{1/2}$. 

Given the observed sample $S_{n} = (X_{i}, Y_{i})_{i=1}^{n}$,
the aim is to construct a predictor (usually called an estimator) $\widehat{g}$ whose risk $R(\widehat{g})$ is small.
A learning procedure is a measurable function mapping a sample in $(\R^d \times \R)^n$ to a measurable function $\R^d \to \R$.
In what follows, we avoid measurability issues and use a standard convention that all events appearing in the probabilistic statements are measurable.
Given a sample $S_{n} = (X_{i}, Y_{i})_{i=1}^{n}$, we usually write $\wh g$ for the function $\wh g (S_n)$. Finally, we remark that since the sample $S_n$ is random, the function $\wh g = \wh g(S_n)$ is also random and so is $R(\wh g)$.

\section{Distribution-free linear regression: 
known results
}
\label{sec:proper-vs-improper}
In this section, we set the context for the rest of this work, by reviewing relevant existing results on distribution-free linear prediction, and framing them in our setting (through minor modifications).
We remark that the bounds we are about to discuss hold in expectation, whereas we will also be concerned with high-probability guarantees.
As will be seen in Section~\ref{sec:constant-probability-failure}, the distinction between the two is not innocuous, as existing procedures achieving distribution-free expected excess risk bounds do not possess
matching guarantees in deviation.

\paragraph{Limitations of proper estimators.}
Recall that in the context of our work, a learning procedure is called \emph{proper}
if it always returns an element of the class $\Flin$ (that is, a linear function); otherwise, it is called
\emph{improper} or
\emph{non-linear}.
The importance of
considering improper estimators stems from a fundamental limitation of proper procedures in our distribution-free setting.
Specifically, it follows
from the work of Shamir \cite[Theorem
  3]{shamir2015sample} that for any proper estimator
$\widehat{g}_{\mathrm{proper}}$, there exists a distribution of $(X, Y)$ with the response
variable $Y$ almost surely bounded by $m$, for which
\begin{equation}
  \label{eq:lowerbound-proper-shamir}
  \E R (\wh g_{\mathrm{proper}}) - \inf_{g \in \Flin} R (g)
  \gtrsim m^2.
\end{equation}
Thus,
even when the response is bounded,
no proper learning procedure can improve (up to universal constants) over the risk trivially achieved by the zero function, without some restrictions on the distribution of covariates.
As discussed in the introduction, this negative result already rules out many procedures introduced and analyzed in the statistical learning and robust estimation literature, including empirical risk minimization and refinements thereof.

\paragraph{Learning
  with known covariance structure.}

We now discuss a simplified setting, in which guarantees can be obtained quite directly.
Specifically, assume that the \emph{covariance structure} of the distribution $P_X$, namely, the map $w \mapsto \E \langle w, X \rangle^{2}$ (which can take infinite values), is known.
As noted in Section~\ref{sec:setting-notations}, we can restrict our attention to the linear subspace where
the above map takes finite values. Thus, we may assume without loss of generality
that the covariance matrix $\Sigma = \E XX^{\mathsf{T}}$ exists.
In addition, up to restricting to the orthogonal of the nullspace
$\{ w \in \R^d : \Sigma w = 0 \}$, we may assume in what follows that the
covariance matrix $\Sigma$ is invertible. Hence, the unique minimizer of the
risk $R(f)$ in $\Flin$ is given by $f^* = \langle w^*, \cdot \rangle$, 
where  $w^* = \Sigma^{-1} \E [YX]$. In addition, the excess risk of any linear function
$f = \langle w, \cdot \rangle$
is given by the following
identity:
\begin{equation}
\label{eq:ex-risk-ident}
R (f) - \inf \limits_{g \in \Flin}R(g) = \| \Sigma^{1/2} (w - w^*) \|^2.
\end{equation}

The key simplification provided by the knowledge of $\Sigma$ is that random-design
linear regression reduces to multivariate mean estimation.
To see this, consider the change of variables
$\theta = \Sigma^{1/2}w$ and notice that
the excess risk~\eqref{eq:ex-risk-ident} is then equal to $\| \theta - \theta^* \|^2$,
where $\theta^{*} = \E U$ for $U = Y\Sigma^{-1/2}X$.
Using
$\Sigma$, an \iid sample $(X_{i}, Y_{i})_{i=1}^{n}$ can
be turned into an \iid sample $(U_{i})_{i=1}^{n}$, with $U_{i} = Y_i \Sigma^{-1/2} X_i$ distributed
as $U$. One can thus estimate $\E U$ by the sample mean
$\frac{1}{n}\sum_{i=1}^{n}U_{i}$. This leads to the \emph{projection estimator} for
our original problem, defined as
\begin{equation}
  \label{eq:projection-estimator}
  \widehat{g}_{\mathrm{proj}}(x) = \langle \widehat{w}, x \rangle\quad\text{where}\quad
  \widehat{w} = \Sigma^{-1} \cdot \frac{1}{n}\sum_{i=1}^{n}Y_{i}X_{i}.
\end{equation}
Under Assumption~\ref{assumption:y-given-x}, we have
\[
  \E UU^\top
  = \E \big[ \E [Y^2 \cond X] \Sigma^{-1/2} X X^\top \Sigma^{-1/2} \big]
  \mleq m^2 I_d ,
\]
and in particular $\tr(\cov (U)) \leq m^2 d$ and $\opnorm{\cov (U)} \leq m^2$.
Applying the first inequality to the empirical mean estimator of $\theta^* =
\E U$ leads to the following guarantee for the projection estimator,
which corresponds up to minor changes in assumptions\footnote{
  Specifically, \cite{tsybakov2003optimal} assumes that the noise
  $Y - \freg (X)$ is independent of $X$, but the same proof applies
  when replacing this assumption by the conditional moment bound of
  Assumption~\ref{assumption:y-given-x}.}
to the result of Tsybakov~\cite[Theorem~4]{tsybakov2003optimal}:
\begin{equation}
  \label{eq:known-covariance-expectation}
  \E R (\widehat{g}_{\mathrm{proj}}) - \inf_{g \in \Flin} R (g)
  \leq \frac{m^2 d}{n}.
\end{equation}
It is worth noting that there is no contradiction between the lower bound
\eqref{eq:lowerbound-proper-shamir} and the above upper bound.
Indeed, the projection estimator, while proper, relies on the a priori knowledge of $\Sigma$, which is unavailable in the typical statistical learning setting.
This implies in particular that the 
knowledge of $\Sigma$ is sufficient to
avoid the previous failure of proper procedures.
In
this work, the simplified setting with known covariance
serves as a benchmark that we aim to match in the general
case, where nothing is known a priori about the distribution of $X$.

\paragraph{Upper bounds in expectation via non-linear predictors.}
As mentioned in the introduction, and with the exception of the aforementioned known covariance setting, there
are
two known results stating non-trivial in-expectation
guarantees without restrictions on the distribution of $X$.
These guarantees are achieved, respectively, by the truncated linear least squares estimator $\ftrunc$ and the Forster-Warmuth estimator $\fw$, which we now define formally.

First, consider the linear least squares
estimator $\ferm = \argmin_{g \in \Flin} \wh R (g) = \langle \werm, \cdot \rangle$, where
\begin{equation}
  \label{eq:erm-least-squares}
  \werm
  = \Big( \sum_{i=1}^n X_i X_i^\top \Big)^\dagger \Big( \sum_{i=1}^n Y_i X_i
  \Big)
  = \wh \Sigma_n^\dagger \cdot \frac{1}{n} \sum_{i=1}^n Y_i X_i
  \, ,
\end{equation}
with $\wh \Sigma_n = \frac{1}{n} \sum_{i=1}^n X_i X_i^\top$.
Given a threshold $m > 0$, the truncated least squares estimator $\ftrunc$
returns the prediction of the linear function
$\langle \widehat{w}_{\mathrm{erm}}, \cdot \rangle$,
truncated
to $[-m, m]$. That is,
\begin{equation}
  \label{eq:def-truncated-least-squares}
  \ftrunc (x)
  = \max ( -m, \min ( m, \langle \werm, x \rangle ) ).
\end{equation}
We now turn to
the Forster-Warmuth estimator.
Given the sample $S_{n} = (X_{i}, Y_{i})_{i=1}^{n}$, define the
\emph{leverage score} of a point $x \in \R^{d}$ by
$h_{n}(x) = \langle (n\wh {\Sigma}_{n} + xx^\top)^\dagger x, x\rangle$.
The Forster-Warmuth estimator is then defined by reweighing predictions of
$\langle \werm, \cdot \rangle$ by a function of the
statistical leverage of the input point $x$:
\begin{equation}
  \label{eq:def-forster-warmuth}
  \fw (x)
  = \big( 1 - h_n (x) \big)^2 \cdot \langle
  \widehat{w}_{\mathrm{erm}}, x\rangle .
\end{equation}

Recall the guarantees on the risk of these procedures, stated in the introduction.
Specifically, 
under Assumption~\ref{assumption:y-given-x},
the truncated least squares estimator satisfies the 
oracle inequality~\eqref{eq:gyorfi-bound}, while the Forster-Warmuth estimator achieves the excess risk bound~\eqref{eq:fwexcesrisk}.
Note that both procedures are improper, as they introduce non-linearities in the prediction function, either through truncation or through the leverage correction.

As discussed in the recent works
\cite{mourtada2019exact, vavskevivcius2020suboptimality}, the risk of the least squares procedure is large when leverage scores are uneven and correlate with the noise.
While this configuration is ruled out under distributional assumptions such as moment equivalences, it can actually occur even under boundedness constraints, leading to poor performance~\cite{vavskevivcius2020suboptimality}.
Both non-linearities partially mitigate the shortcomings of the least squares estimator, by adjusting its predictions at high-leverage points, which are the most unstable and lead to large errors.
These corrections allow these procedures to achieve
in-expectation bounds, even for
unfavorable distributions on which ordinary least squares fail.

\section{An improved bound for
  truncated least squares}
\label{sec:clipped-erm}

As discussed at the end of Section~\ref{sec:proper-vs-improper}, the non-linearities introduced by the truncated least squares and Forster-Warmuth estimators 
aim to mitigate
the instability of ERM predictions at high-leverage points. 
The more sophisticated Forster-Warmuth procedure (which relies on an explicit leverage correction), however, leads to a better excess risk guarantee. 
Indeed, the risk guarantee of $\ftrunc$ takes the form of an inexact oracle inequality, suffering from the approximation error
term $ \inf_{f\in \Flin} R (f) - R (\freg)$.
This type of guarantee only ensures that the procedure approaches the performance of the best linear function in the nearly well-specified case, where the true regression function is almost linear.
While reasonable
in low-dimensional nonparametric estimation~\cite{gyorfi2002nonparametric} (with appropriate linear spaces), such an assumption is generally restrictive in high-dimensional problems and is not satisfied in our setting.
Unfortunately, the proof technique employed in \cite{gyorfi2002nonparametric}
can only yield inexact oracle inequalities, and hence,
no straightforward modification to their argument can
match guarantees of $\widehat{g}_{\mathrm{FW}}$ given by \eqref{eq:fwexcesrisk}.

A natural question remains of whether the gap between the existing in-expectation performance guarantees
given by~\eqref{eq:gyorfi-bound} and \eqref{eq:fwexcesrisk} is intrinsic to the estimators $\ftrunc$ and $\widehat{g}_{\mathrm{FW}}$, or whether it is a byproduct of suboptimal analysis of the performance of the simpler procedure $\ftrunc$.
In the theorem below, we show that truncated least squares estimator
indeed matches the statistical performance of the Forster-Warmuth algorithm.
Our proof is based on
a leave-one-out argument
akin to the one used to
prove the upper bound \eqref{eq:fwexcesrisk} in \cite[Section 3]{forster2002relative}.
We remark that leave-one-out arguments have a long history; see the references \cite[Chapter 6]{Vapnik74} and \cite{haussler1994predicting}.
\begin{theorem}
  \label{thm:clipped-erm-loo-bound}
  Suppose that Assumption~\ref{assumption:y-given-x} holds and let
  $\ftrunc$ denote the truncated least squares estimator~\eqref{eq:def-truncated-least-squares}. Then, we have
  \begin{equation}
    \label{eq:bound-truncated}
    \E R (\ftrunc) - \inf_{f \in \Flin} R (f)
    \leq \frac{8 m^{2}d}{n+1}.
  \end{equation}
\end{theorem}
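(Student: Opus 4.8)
The plan is to use a leave-one-out argument in the style of Forster–Warmuth. Write $\werm^{(i)}$ for the least squares coefficient computed from the sample with the $i$-th pair removed, and let $\ftrunc^{(i)}(x) = \max(-m,\min(m,\langle \werm^{(i)},x\rangle))$ be the corresponding truncated predictor. The key structural fact to establish first is a pointwise identity or inequality relating the in-sample truncated residual at point $i$ for the full-sample estimator to the leave-one-out residual; concretely, I would show that the in-sample error $\frac1n\sum_i (\ftrunc(X_i)-Y_i)^2$ lower-bounds, up to the right constants, the leave-one-out sum $\frac1n\sum_i (\ftrunc^{(i)}(X_i)-Y_i)^2$. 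This kind of "stability" inequality is exactly what the least squares normal equations give: removing one point can only decrease the empirical risk, and the rank-one update formula (Sherman–Morrison on $n\wh\Sigma_n$) controls how much $\langle\werm,X_i\rangle$ moves when $(X_i,Y_i)$ is deleted, with the move quantified by the leverage score $h_n(X_i)$. Truncation to $[-m,m]$ is $1$-Lipschitz and Assumption~\ref{assumption:y-given-x} bounds $\E[Y^2\mid X]$, which is what lets the $m^2$ factor enter cleanly.

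Next I would take expectations and exploit the i.i.d. structure. Since $\ftrunc^{(i)}$ is independent of $(X_i,Y_i)$, we have $\E(\ftrunc^{(i)}(X_i)-Y_i)^2 = \E R(\ftrunc^{(i)}) = \E R(\ftrunc_{n-1})$, where $\ftrunc_{n-1}$ denotes the truncated least squares estimator built from a sample of size $n-1$. On the other hand, because truncation to $[-m,m]$ is a contraction toward any point of $[-m,m]$ and $|\freg(X)|\le m$ under Assumption~\ref{assumption:y-given-x}, the in-sample truncated error is bounded above by the in-sample error of the best linear fit, hence by $\inf_{f\in\Flin}\frac1n\sum_i(f(X_i)-Y_i)^2$, whose expectation is $\inf_{f\in\Flin}R(f)$ (or even smaller). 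Combining these with the leave-one-out inequality yields a recursion of the shape
\begin{equation}
  (n+1)\,\E\big[R(\ftrunc_n) - \textstyle\inf_{f\in\Flin}R(f)\big]
  \le n\,\E\big[R(\ftrunc_{n-1}) - \textstyle\inf_{f\in\Flin}R(f)\big] + 8m^2 d,
\end{equation}
or an averaged telescoping version of it, from which~\eqref{eq:bound-truncated} follows by induction on $n$ (the base case being handled crudely since every term is at most $O(m^2)$ after truncation). The dimension $d$ appears through $\sum_i \E h_n(X_i) = \E\tr\big((n\wh\Sigma_n)^\dagger \sum_i X_iX_i^\top\big) \le d$, the standard leverage-sum identity.

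The main obstacle I anticipate is making the pointwise stability step both correct and tight. One has to handle the interaction between the rank-one downdate of the Gram matrix and the truncation nonlinearity carefully: the clean algebraic identities (e.g. $(\ftrunc(X_i)-Y_i)^2 \approx (1-h_n(X_i))^2(\text{something})$) hold for the untruncated predictor, and one needs the truncation to only help, uniformly in the sign and magnitude of the leverage correction. A secondary subtlety is the degenerate case where $\wh\Sigma_n$ (or its leave-one-out version) is singular, i.e. $n \lesssim d$; there the Moore–Penrose inverse must be used consistently and the leverage scores can equal $1$, which should be fine but needs checking. Getting the constant down to $8$ rather than something larger will require not wasting factors when passing from the one-point stability bound to the expectation, so I would keep the leverage-weighted terms exact as long as possible before bounding.
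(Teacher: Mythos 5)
You have the right ingredients---a leave-one-out argument, the Sherman--Morrison leverage identity, the leverage-sum bound $\sum_i h_i\le d$, and the interaction of truncation with Assumption~\ref{assumption:y-given-x}---and this is indeed the paper's strategy. But two load-bearing steps, as you describe them, would fail. The first is the claim that the in-sample truncated error is bounded above by the in-sample error of the best linear fit. This is false pointwise: Assumption~\ref{assumption:y-given-x} bounds $\E[Y^2\mid X]$, not $|Y|$, so at a sample point with $|Y_i|>m$ truncating the prediction to $[-m,m]$ can \emph{increase} the residual (take $m=1$, $Y_i=2$, $\langle \werm,X_i\rangle=1.9$). The contraction-toward-$\freg(X_i)$ argument only converts into a statement about the error against $Y_i$ after taking the conditional expectation over $Y_i$ given $X_i$, and for the cross term to vanish there the predictor must be independent of $(X_i,Y_i)$ --- which the full-sample $\werm$ is not. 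The paper's proof avoids this entirely: it keeps the exact identity $\wt g(X_j)=(1-h_j)\,\wt g^{(j)}(X_j)+h_jY_j$ on the \emph{untruncated} predictors, and only then replaces the leave-one-out predictor $\wt g^{(j)}$ by its truncation, inside a conditional expectation given $S_{n+1}^{(j)}$ and $X_j$ where the bias--variance decomposition applies. Your own ``obstacle'' paragraph asks for the truncation to help ``uniformly in the sign and magnitude of the leverage correction''; it does not help uniformly, only in this conditional expectation, and only for the leave-one-out predictor.

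The second problem is the recursion. Writing $a_n$ for the expected excess risk at sample size $n$, the inequality $(n+1)a_n\le na_{n-1}+8m^2d$ telescopes to $(n+1)a_n\le a_0+8nm^2d$, i.e.\ $a_n=O(m^2d)$, and no induction hypothesis of the form $a_{n-1}\le Cm^2d/n$ closes it (one would need $C+8\le C$). There is in fact no recursion in the correct argument: the quantity on the right-hand side should not be the risk of the estimator at size $n-1$ but the expected \emph{in-sample} risk of the full-sample ERM, which is at most $\inf_{f\in\Flin}R(f)$ since the expectation of an infimum is at most the infimum of expectations. This yields the one-shot bound $(n+1)\big(\E R(\ftrunc)-\inf_{f\in\Flin}R(f)\big)\le 8m^2\,\E\big[\sum_j h_j\big]\le 8m^2d$, with the estimator built on $n$ points appearing only on the left. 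Both gaps are repairable, but they require reorganizing the argument around the paper's ordering of the leverage identity and the truncation step.
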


\begin{proof}
  To simplify the presentation, we introduce
  additional notation.
  Let
  $S_{n+1} = (X_i, Y_i)_{i=1}^{n+1}$
  denote an \iid sample of size $n+1$.
  For any $j \in \{ 1, \dots, n + 1 \}$, let $S_{n+1}^{(j)} = (X_i,Y_i)_{i=1, i\neq j}^{n+1}$ be the dataset obtained by removing the $j$-th sample.
  On the sample $S_{n+1}$ (\resp $S_{n+1}^{(j)}$), we 
  define
  the minimal norm
  empirical risk minimizer $\wt g$ (\resp $\wt g^{(j)}$) and its truncated variant $\wt g_\trunc$ (\resp $\wt g_\trunc^{(j)}$).

  Since $S_{n+1}$ is an \iid sample, for every $j \in \{ 1, \dots, n+1\}$, $S_{n+1}^{(j)}$ has the same distribution as $S_n = S_{n+1}^{(n+1)}$ (so that $\wt g_\trunc^{(j)}$ has the same distribution as $\ftrunc = \wt g_\trunc^{(n+1)}$), and is independent of $Z_j = (X_j, Y_j)$.
  This implies that the expected excess risk of $\ftrunc$ can be bounded
  as follows:
  \begin{align}
    \E \, \excessrisk (\ftrunc)
    &=
      \E_{S_{n+1}}
        \big( \wt g_\trunc^{(n+1)} (X_{n+1}) - Y_{n+1}\big)^{2}
        - \inf_{g \in \Flin}
      \E_{Z_{n+1}}
      \left(g(X_{n+1}) - Y_{n+1}\right)^{2}\\
    &=
      \E_{S_{n+1}}\bigg[
        \frac{1}{n+1}\sum_{j=1}^{n+1}
        \left(\wt {g}^{(j)}_{\trunc}(X_{j}) - Y_{j}\right)^{2}
      \bigg]
        - \inf_{g \in \Flin}
        \E_{S_{n+1}} \bigg[
          \frac{1}{n+1}\sum_{j=1}^{n+1}
          \left(g(X_{j}) - Y_{j}\right)^{2}
        \bigg] \\
    &\leq
      \E_{S_{n+1}}\bigg[
        \frac{1}{n+1}\sum_{j=1}^{n+1}
        \left(\wt g^{(j)}_{\trunc}(X_{j}) - Y_{j}\right)^{2}
        - \left(\wt g (X_{j}) - Y_{j}\right)^{2}
       \bigg], \label{eq:loo-intermediate-step}
  \end{align}
  where the last line follows from the definition of $\wt g$.
  Now, define the leverage $h_j$ of the point $X_j$ among $X_1, \dots, X_{n+1}$ by
  \begin{equation}
    \label{eq:def-leverage-proof}
    h_j
    = \bigg\langle \bigg( \sum_{i=1}^{n+1} X_i X_i^\top \bigg)^\dagger X_j, X_j \bigg\rangle
    \in [0, 1]. 
  \end{equation}
  An explicit 
  computation---postponed to the end of the proof---shows that for every $j$,  
  \begin{equation}
    \label{eq:proof-identity-leverage}
    \wt g (X_j)
    = (1 - h_j) \, \wt g^{(j)} (X_j) + h_j Y_j.
  \end{equation}
  Plugging~\eqref{eq:proof-identity-leverage} into the bound~\eqref{eq:loo-intermediate-step}, we obtain
  \begin{equation}
    \label{eq:loo-after-sherman-morrison}
    \E \, \excessrisk (\ftrunc)
    \leq
      \E \bigg[
        \frac{1}{n+1}\sum_{j=1}^{n+1}
        \left(\wt {g}^{(j)}_{\trunc}(X_{j}) - Y_{j}\right)^{2}
        - (1-h_{j})^{2}\left( \wt {g}^{(j)} (X_{j}) - Y_{j}\right)^{2}
       \bigg].
  \end{equation}
  By Assumption~\ref{assumption:y-given-x} and Jensen's inequality we have
  $\sup_{x \in \R^{d}} |\freg (x)| \leq m$.
  It follows that $(\wt g_m^{(j)} (X_j) - \freg (X_j))^2 \leq (\wt g^{(j)} (X_j) - \freg (X_j))^2$, so that 
  \begin{align}
    \label{eq:proof-truncation-reduce}
    &\E \Big[ (1 - h_j)^2 \big( \wt {g}^{(j)} (X_{j}) - Y_{j}\big)^{2} \, \big| \, S_{n+1}^{(j)}, X_j \Big] \\
    &= (1 - h_j)^2 \Big( (\wt g^{(j)} (X_j) - \freg (X_j))^2 + \E \big[ ( \freg (X_{j}) - Y_{j})^{2} |  S_{n+1}^{(j)}, X_j \big] \Big) \\
    &\geq \E \Big[ (1 - h_j)^2 \big( \wt {g}^{(j)}_{m} (X_{j}) - Y_{j}\big)^{2} \, \big| \, S_{n+1}^{(j)}, X_j \Big].      
  \end{align}
  Plugging the above in the upper bound~\eqref{eq:loo-after-sherman-morrison}, we proceed as follows
  \begin{align}
    \E \, \excessrisk (\wh g_{m})
    &\leq
      \E \bigg[
        \frac{1}{n+1}\sum_{j=1}^{n+1}
        \big(\wt g^{(j)}_{m}(X_{j}) - Y_{j}\big)^{2}
        - (1-h_{j})^{2} \big(\wt g_{m}^{(j)}(X_{j}) - Y_{j}\big)^{2}
       \bigg] \\
    &\leq
      \E \bigg[
        \frac{1}{n+1}\sum_{j=1}^{n+1}
        2h_{j}\big(\wt {g}^{(j)}_{m}(X_{j}) - Y_{j}\big)^{2}
       \bigg] \\
    &\leq
      8m^{2} \E \bigg[
        \frac{1}{n+1}\sum_{j=1}^{n+1}
        h_{j}
       \bigg] 
           \leq 8\frac{m^{2}d}{n+1},
  \end{align}
  where the penultimate step follows from Jensen's inequality combined with
  Assumption~\ref{assumption:y-given-x} and the last step
  follows from the bound $\sum_{j=1}^{n+1} h_j = \tr \big[ \big( \sum_{i=1}^{n+1} X_i X_i^\top \big)^\dagger \big( \sum_{i=1}^{n+1} X_i X_i^\top \big) \big] \leq d$.

  We now conclude by showing the identity~\eqref{eq:proof-identity-leverage}.
  First, define
  \begin{align*}
    \wt \Sigma
    = \sum_{i=1}^{n+1} X_{i}X_{i}^{\mathsf{T}} ,\quad
    \wt \Sigma^{(j)} = \wt \Sigma - X_{j}X_{j}^\top,\quad
    b = \sum_{i=1}^{n+1} Y_{i} X_{i},
    \enskip\text{and}\enskip
    b^{(j)} = b - Y_{j} X_{j},
  \end{align*}
  so that
  \begin{equation*}
    \wt g (X_j)
    = \langle \wt \Sigma^\dagger b, X_j \rangle, \quad
    \wt g^{(j)} (X_j)
    = \langle \big( \wt \Sigma^{(j)} \big)^\dagger b^{(j)}, X_j \rangle, \quad \text{and} \quad
    h_j = \langle \wt \Sigma^\dagger X_j, X_j \rangle.
  \end{equation*}
  Note that~\eqref{eq:proof-identity-leverage} is an identity, and up to restricting to the linear span of $(X_1, \dots, X_{n+1})$ we may assume that $\wt \Sigma$ is invertible.
  In addition, if $X_j$ does not belong to the linear span of $(X_i)_{i=1,i\neq j}^{n+1}$, namely, if $\wt \Sigma^{(j)}$ is singular, then it can be shown that $h_j = 1$ and $\wt g (X_j) = Y_j$ (since $\wt g$ minimizes the empirical risk on $S_{n+1}$, and $ g (X_j)$ can be set freely without affecting the other predictions), so that~\eqref{eq:proof-identity-leverage} holds.
  Therefore, we may assume that $\wt \Sigma^{(j)}$ is invertible.
  Using the definition and the Sherman-Morrison formula, as $h_j \in [0,1)$, we obtain
  \begin{align}
    \wt g^{(j)} (X_{j})
    &= \bigg\langle
      \bigg( \wt \Sigma^{-1} +
      \frac{\wt \Sigma^{-1} X_{j}X_{j}^\top \wt \Sigma^{-1}}{1- h_{j}} \bigg) \left(b - Y_{j} X_{j}\right), 
      {X_{j}} \bigg\rangle \\
    &= \wt g (X_{j}) +
      \frac{h_{j}}{1-h_{j}} \wt g (X_{j})
      - h_{j}Y_{j} - \frac{h_{j}^{2}}{1-h_{j}}Y_{j} \\
    &= \frac{1}{1-h_{j}} \wt g (X_{j}) -
      \frac{h_{j}}{1-h_{j}}Y_{j};
  \end{align}
  rearranging the last equality yields~\eqref{eq:proof-identity-leverage}, concluding the proof.
\end{proof}

\section{Failure of previous estimators with constant probability
}
\label{sec:constant-probability-failure}

As discussed in Section~\ref{sec:proper-vs-improper},
Assumption~\ref{assumption:y-given-x} suffices to ensure that the Forster-Warmuth
estimator \cite{forster2002relative} achieves an expected excess risk bound of order $m^2d/n$
irrespective of the distribution of $X$. Our results established in
Section~\ref{sec:clipped-erm} demonstrate the same conclusion for the truncated
least squares estimator of \cite[Theorem 11.3]{gyorfi2002nonparametric}.
In addition to the guarantees in expectation, high-probability or tail bounds are desirable,
as they provide a control on the probability of failure of the estimator.
The following theorem shows that in fact, none of the two procedures
satisfy meaningful high-probability guarantees, in a rather strong sense.

\begin{theorem}
  \label{thm:lower-bound-deviation-trunc-fw}
  Fix the dimension $d = 1$.
  There exist absolute constants $c > 0$ and $n_0 \geq 2$
  such that the following holds.
  For any $n \geq n_0$, there is a distribution $P = P(n)$ of $(X,Y)$ with $\| Y \|_{L_\infty} \leq m$, such that if $\wh g$ is either the truncated least squares estimator
  \eqref{eq:def-truncated-least-squares} or the Forster-Warmuth estimator
  \eqref{eq:def-forster-warmuth}, computed on an \iid sample $S_n$, then
  \begin{equation}
    \label{eq:lower-bound-fw-truncated}
    \P \Big( R (\wh g) - \inf_{g \in \Flin} R (g) \geq c \, m^2 \Big)
    \geq c
    \, .
  \end{equation}
\end{theorem}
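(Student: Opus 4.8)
The plan is to construct, for each sample size $n$, a one-dimensional distribution on which both estimators have an $\Omega(m^2)$ excess risk on an event of constant probability. The basic mechanism is the one identified in Section~\ref{sec:proper-vs-improper}: least squares (and the non-linearities built on top of it) behave badly when a rare, high-leverage covariate value is paired with an atypical response, and the truncation/leverage corrections only partially tame this. Concretely, I would take $X$ to be a mixture: with probability $1 - p_n$, $X = 1$ (a ``bulk'' value), and with probability $p_n \asymp 1/n$, $X = M_n$ for some large $M_n \to \infty$; the response $Y$ is then chosen so that the best linear fit is (close to) the zero function, while conditionally on $X = M_n$ we force $Y = \pm m$ (say $Y = m$), and conditionally on $X = 1$ we put $Y$ on a small scale $\eta_n \to 0$. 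Because $p_n n$ is bounded away from $0$ and from $\infty$, with constant probability exactly one ``spike'' sample $(M_n, m)$ appears in $S_n$; I would condition on this event $E_n$ for the rest of the argument.

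On the event $E_n$, the empirical least squares slope $\werm$ is dominated by the single spike point: $\werm \approx m/M_n$ up to lower-order corrections coming from the bulk. This means $\langle \werm, \cdot\rangle$ predicts $\approx m$ at $x = M_n$ but a nonzero value $\approx m/M_n$ at the bulk point $x = 1$ — which in itself is negligible. The key point is the \emph{other} direction: on the bulk, $\werm$ is a poor fit because the regression function there is essentially $0$ but the slope is biased by the spike, \emph{and} more importantly the bulk response scale $\eta_n$ is tiny, so the best linear predictor on this mixture is close to zero and achieves risk $\approx p_n m^2 + O(\eta_n^2) = O(m^2/n)$, whereas $\wh g$ on $E_n$ has risk bounded below by the contribution of the bulk, where it predicts a value that is $\Theta$ away from $0$ relative to $\eta_n$ — or, alternatively, I arrange things so that the spike dominates the empirical fit and thereby the \emph{bulk} prediction error is $\Theta(m^2)$-large after weighting by the bulk probability $1 - p_n \asymp 1$. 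For the Forster--Warmuth estimator one checks that the leverage $h_n(1)$ of the bulk point is small (of order $1/n$) on $E_n$, so the multiplicative correction $(1 - h_n(1))^2 \approx 1$ does essentially nothing to the bulk predictions; for the truncated estimator, truncation to $[-m,m]$ is vacuous at the bulk point once the predicted value there is $O(m)$. Hence both estimators essentially return $\langle \werm, \cdot\rangle$ restricted to the bulk, and the computation reduces to lower-bounding $\| \langle \werm, \cdot\rangle - \langle w^*, \cdot\rangle\|_{L_2(P_X)}^2 \geq (1 - p_n)\,(\werm - w^*)^2 \gtrsim m^2$ using~\eqref{eq:ex-risk-ident}.

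The main obstacle is calibrating the three scales $p_n$, $M_n$, $\eta_n$ so that simultaneously: (i) the \emph{population} best linear risk is genuinely small, so that there is a real gap to exhibit — this forces $\eta_n$ small and $w^*$ near $0$, which requires checking $w^* = \Sigma^{-1}\E[YX]$ with $\Sigma = (1-p_n) + p_n M_n^2$ and $\E[YX] = (1-p_n)\E[Y|X=1] + p_n m M_n$; (ii) on $E_n$ the empirical fit is dominated by the spike in the right way, which needs $p_n n = \Theta(1)$ and control of the bulk empirical moments via a routine concentration/conditioning argument (the bulk samples are i.i.d.\ bounded, so Bernstein suffices); and (iii) the leverage of the bulk point stays $O(1/n)$ on $E_n$ — here I would just note $h_n(1) = \langle (n\wh\Sigma_n + 1)^\dagger 1, 1\rangle \leq 1/(n\wh\Sigma_n)$ and that $\wh\Sigma_n$ is bounded below by a constant on $E_n$ with the chosen scaling. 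Beyond the calibration, one must be slightly careful because the excess risk is a difference and could a priori be negative; but on $E_n$ we get a genuine two-sided bound showing $R(\wh g) - \inf_{g}R(g) \geq c m^2$, so no cancellation occurs, and taking $c$ to also be a lower bound on $\P(E_n)$ gives~\eqref{eq:lower-bound-fw-truncated}. The constant $n_0$ enters only to ensure $p_n \leq 1$ and the asymptotic relations between the scales hold.
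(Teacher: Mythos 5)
There is a genuine gap: your construction assigns the responses the wrong way around, and with that assignment both estimators actually succeed on your conditioning event, so the claimed $\Omega(m^2)$ bound never materializes. Concretely, with bulk $(X,Y)=(1,O(\eta_n))$ and spike $(X,Y)=(M_n,m)$, on the event that $K\geq 1$ spikes appear the least-squares slope is
\[
\werm \;=\; \frac{K\,m\,M_n + O(n\eta_n)}{K\,M_n^2 + (n-K)} \;=\; O\Big(\frac{m}{\sqrt n} + \eta_n\Big),
\]
since $m M_n/(M_n^2+n)\leq m/(2\sqrt n)$ for every choice of $M_n$. Hence the prediction at the bulk point is $o(m)$, matching the bulk response up to $o(m)$, so the bulk contribution to the risk is $o(m^2)$ — the ``bias from the spike'' you want to exploit is at most of order $m/\sqrt n$, never $\Theta(m)$, no matter how you calibrate $p_n, M_n, \eta_n$. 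The spike contribution cannot save the argument either: for $\ftrunc$ the prediction is clipped to $[-m,m]$, so the spike error is at most $4m^2$ and, weighted by $p_n\asymp 1/n$, contributes $O(m^2/n)$. Meanwhile the best linear predictor also has small risk, so there is simply no constant-order gap in your example; your own parenthetical remark that the bulk misprediction $\approx m/M_n$ is ``negligible'' is the symptom of this. The fallback you sketch (``arrange things so that the bulk prediction error is $\Theta(m^2)$'') would require $|\werm|=\Theta(m)$ against a near-zero bulk response, which the display above rules out.

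The mechanism that does work — and is the one the paper uses — is the mirror image of yours: put the \emph{large} response on the common covariate value and the \emph{zero} response on the rare high-leverage value, e.g.\ $(X,Y)=(1,m)$ with probability $1-1/n$ and $(X,Y)=(\sqrt n,0)$ with probability $1/n$. Then the population problem forces a genuine trade-off (the best linear risk is $\approx m^2/2$, attained at $w^*\approx 1/2$ for $m=1$), while on the constant-probability event that $K\geq 4$ spikes appear in the sample, the zero-response spikes drag the empirical slope down to $\werm\leq 1/(K+1)\leq 1/5$, so the estimator \emph{under}-predicts on the bulk and its risk exceeds the optimum by a constant. This direction of failure is essential for handling both estimators at once: truncation at level $m$ and the Forster--Warmuth factor $(1-h_n(x))^2\leq 1$ can only shrink predictions further, so they cannot repair under-prediction, whereas the over-prediction bias your construction aims at could in principle be mitigated by exactly these corrections (and is in any case vanishing). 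Note also that conditioning on a single spike is not enough even in the correct construction; one needs several spikes ($K\geq 4$) so that the shrunken slope beats the optimal trade-off value by a constant margin.
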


Note that under Assumption~\ref{assumption:y-given-x}, the trivial, identically
zero function has risk at most $\E Y^{2} \leq m^{2}$.
Theorem~\ref{thm:lower-bound-deviation-trunc-fw} states that, with constant
probability, the truncated least squares and the Forster-Warmuth estimators
incur a constant excess risk of the same order.
At the first sight, this property
may seem incompatible with expected excess risk bounds of order
$d/n$.
However, one should keep in mind that the estimators in question are improper
(returning predictors outside of the class $\Flin$), so that the excess risk
may well take negative values; the expected excess risk remains small
due to the fact that positive and negative values essentially compensate in
expectation, regardless of the distribution.

A related phenomenon was observed in the context of model
selection-type aggregation by Audibert~\cite{audibert2008deviation}, who showed
that the (improper) progressive mixture rule~\cite{yang2000mixing,catoni2004statistical}, known to achieve fast rates in expectation, exhibits slow rates in deviation.
In our context the failure in deviation is even more severe, as the
excess risk is of constant order, rather than exhibiting slow rates.

\begin{proof} 
For any $n \geq n_0$, let $P = P(n)$ be the distribution of $(X,Y)$ satisfying
  \begin{equation}
    \label{eq:bad-distribution}
    (X,Y)
    = \left\{
      \begin{array}{ll}
        (1,m) & \mbox{ with probability } 1-\frac{1}{n}\,; \\
        (\sqrt{n}, 0) & \mbox{ with probability } \frac{1}{n}\,.
      \end{array}
    \right.
  \end{equation}
  By homogeneity, we may assume that $m=1$.
  For any $w \in \R$, set $g_w(x) = w\cdot x$. We have
  \begin{equation*}
    R (g_w)
    = \Big( 1 - \frac{1}{n} \Big) (w-1)^2 + \frac{1}{n} (w \sqrt{n})^2
    = \Big( 1 - \frac{1}{n} \Big) (w-1)^2 + w^2
    \, .
  \end{equation*}
  It follows that the risk of the best linear predictor is equal to
  \begin{equation}
    \label{eq:proof-failure-bestlinear}
    \inf_{w \in \R} R (g_w)
    = \frac{1 - 1/n}{2 - 1/n}
    \leq \frac{1}{2}
    \, .
  \end{equation}
  In addition, let $K = K_n$ denote the number of indices $i=1, \dots, n$ such that $X_i = \sqrt{n}$.
  The empirical risk writes
  \begin{equation*}
    \wh R_n (g_w)
    = \Big( 1 - \frac{K}{n} \Big) (w - 1)^2 + K w^2,
    \quad \mbox{and so} \quad
    \werm
    = \argmin_{w \in \R} \wh R_n (g_w)
    = \frac{1 - K/n}{K + 1 - K/n}
    \, .
  \end{equation*}
  In particular, $0 \leq \werm \leq 1/(K+1)$.
  Now, note that if $\wh g$ denotes either the truncated least squares
  \eqref{eq:def-truncated-least-squares} or the
  Forster-Warmuth estimator \eqref{eq:def-forster-warmuth},
  then $\wh g(1) \leq \werm \cdot 1 \leq 1/(K+1) \leq 1$, and thus, denoting
  the sample $(X_{i}, Y_{i})_{i=1}^{n}$ by $S_n$, we have
  \begin{equation}
    \label{eq:lowerbound-truncated-fw}
    R (\wh g)
    \geq \E \big[ (\wh g (X) - Y)^2 \indic{X=1} \, \cond \, S_n \big]
    \geq \Big( 1 - \frac{1}{n} \Big) \cdot \Big( \frac{K}{K+1} \Big)^2
    \, .
  \end{equation}
  Thus, under the event $E_n = \{ K_n \geq 4 \}$,
  it follows from~\eqref{eq:proof-failure-bestlinear} and~\eqref{eq:lowerbound-truncated-fw} that for $n \geq 16$,
  \begin{equation}
    \label{eq:lowerbound-excessrisk-fw}
    R (\wh g) - \inf_{g \in \Flin} R (g)
    \geq \Big( 1 - \frac{1}{n} \Big) \cdot \Big( \frac{K}{K+1} \Big)^2 - \frac{1}{2}
    = \Big( 1 - \frac{1}{16} \Big) \cdot \frac{16}{25} - \frac{1}{2}
    = \frac{1}{10}
    \, .
  \end{equation}
  Finally, since $K_n$ follows the binomial distribution
  $\binomdist (n, 1/n)$, the probability $\P (E_n)$ is
  positive for $n \geq 16 \geq 4$. Further, since $K_n$ converges in distribution to
  the Poisson distribution $\poissondist (1)$ as $n \to \infty$, $\P (E_n) \to \P
  (\wt K \geq 4) > 0$ with $\wt K \sim \poissondist (1)$, so that setting $p_0 =
  \inf_{n \geq 16} \P (E_n)$, we have $p_0 > 0$.  This
  concludes the proof
  with $c = \min (p_0, 1/10)$ and $n_0 = 16$.
\end{proof}

\section{An
  optimal robust estimator in the high-probability regime}
\label{sec:optimal-estimator}

In this section we present our main positive result. We show that there is an
estimator achieving an optimal accuracy and sub-exponential tails for the linear
class $\Flin$ under Assumption \ref{assumption:y-given-x}. We first consider a
simplified setup where the covariance structure of $X$ is known.

\subsection{Warm-up: known covariance structure}
\label{sec:warm-up:-known}

Following the discussion on the learning model with known covariance structure
in Section~\ref{sec:proper-vs-improper},
we assume in this section that $\Sigma = \E XX^\top$ exists, is invertible and
also known.
Recall the definition of Tsybakov's projection estimator
$\widehat{g}_{\mathrm{proj}}$
\eqref{eq:projection-estimator}. Since this estimator always returns a linear
predictor, its excess risk is non-negative and we may apply
Markov's inequality to show that for any $\delta \in (0, 1)$, it holds that
\[
  \P\left( R (\widehat{g}_{\mathrm{proj}}) - \inf_{g \in \Flin} R (g)
  \leq \frac{m^2 d}{n} \cdot \frac{1}{\delta} \right) \ge 1 - \delta.
\]
An argument similar to the one used in \cite[Proposition 6.2]{catoni2012challenging} can be used to show that this bound is essentially the best we
can hope for the projection estimator, even when $| Y | \leq m$ \as.

Fortunately, there is a way to modify this estimator and obtain a guarantee with
sub-exponential tails.
The result of Lugosi and Mendelson \cite[Theorem
1]{lugosi2019sub} shows for any $\delta \in (0, 1)$, there exists an estimator $\wh \mu_\delta: (\R^d)^n \to \R$ such that, for any sequence $U_1, \dots, U_n$ of \iid random vectors in $\R^d$ with mean $\mu$ and covariance matrix $\wt \Sigma = \cov (U)$, $\wh \mu_\delta = \wh \mu_\delta (U_1, \dots, U_n)$ satisfies
\begin{equation}
  \label{eq:robust-mean-deviation-bound}
  \P \bigg(\|\widehat{\mu}_{\delta} - \mu\|^2 \le c \, \frac{\tr(\widetilde{\Sigma}) +\opnorm{\widetilde{\Sigma}} \log (1/\delta)}{n}\bigg) \ge 1 - \delta,
\end{equation}
where $c > 0$ is an absolute constant.
Now, introduce the \emph{robust} projection estimator
\begin{equation}
  \label{eq:robust-projection}
  \widetilde w
  = \Sigma^{-1/2} \cdot
  \widehat{\mu}_\delta \left( Y_1 \Sigma^{-1/2} X_1, \dots, Y_n \Sigma^{-1/2} X_n \right) ,
\end{equation}
and consider the following result.

\begin{proposition}
  \label{prop:robust-subgaussian}
  There is an absolute constant $c > 0$ such that the following is true. Suppose that Assumption~\ref{assumption:y-given-x} holds. Then, the robust
  projection estimator
  $\wh g = \langle\wt w, \cdot\rangle$ (which is a proper estimator)
  defined in
  \eqref{eq:robust-projection} satisfies
  \begin{equation}
    \label{eq:robust-projection-bound}
    \P \bigg(
    R (\wh g) - \inf_{g \in \Flin} R (g)
    \leq c \, \frac{m^2 \big( d + \log (1/\delta) \big)}{n}
    \bigg)
    \geq 1 - \delta.
  \end{equation}
\end{proposition}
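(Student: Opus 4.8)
The plan is to reduce the problem to robust multivariate mean estimation exactly as in the discussion preceding the statement (the paragraph on learning with known covariance structure), and then invoke the Lugosi--Mendelson estimator guarantee \eqref{eq:robust-mean-deviation-bound} together with the covariance bounds for $U = Y \Sigma^{-1/2} X$ that were already recorded in Section~\ref{sec:proper-vs-improper}.

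First I would set $U_i = Y_i \Sigma^{-1/2} X_i$, which are \iid copies of $U$, and $\theta^* = \E U = \Sigma^{-1/2} \E[YX] = \Sigma^{1/2} w^*$. By the excess risk identity~\eqref{eq:ex-risk-ident}, for $\wh g = \langle \wt w, \cdot\rangle$ one has $\excessrisk(\wh g) = \|\Sigma^{1/2}(\wt w - w^*)\|^2 = \|\Sigma^{1/2}\wt w - \theta^*\|^2$. By the very definition~\eqref{eq:robust-projection} of $\wt w$, we have $\Sigma^{1/2}\wt w = \wh\mu_\delta(U_1,\dots,U_n)$, hence
\[
  \excessrisk(\wh g) = \big\|\wh\mu_\delta(U_1,\dots,U_n) - \theta^*\big\|^2,
\]
so it suffices to control the right-hand side via~\eqref{eq:robust-mean-deviation-bound}.

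To apply~\eqref{eq:robust-mean-deviation-bound}, the only thing to verify is that $U$ has a finite mean and covariance matrix; this is precisely where Assumption~\ref{assumption:y-given-x} enters, through the computation already displayed in Section~\ref{sec:proper-vs-improper}: conditioning on $X$,
\[
  \E UU^\top = \E\big[ \E[Y^2 \mid X]\, \Sigma^{-1/2} X X^\top \Sigma^{-1/2} \big] \mleq m^2\, \E\big[ \Sigma^{-1/2} X X^\top \Sigma^{-1/2} \big] = m^2 I_d,
\]
so that $\cov(U) \mleq \E UU^\top \mleq m^2 I_d$, which gives $\tr(\cov(U)) \le m^2 d$ and $\opnorm{\cov(U)} \le m^2$. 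Plugging these two inequalities into~\eqref{eq:robust-mean-deviation-bound} yields, with probability at least $1-\delta$,
\[
  \big\|\wh\mu_\delta(U_1,\dots,U_n) - \theta^*\big\|^2 \le c\,\frac{\tr(\cov(U)) + \opnorm{\cov(U)}\log(1/\delta)}{n} \le c\,\frac{m^2\big(d + \log(1/\delta)\big)}{n},
\]
which is the asserted bound. That $\wh g$ is proper is immediate since $\wt w \in \R^d$ and $\wh g = \langle \wt w, \cdot\rangle \in \Flin$.

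I do not expect a genuine obstacle here: the argument is the assembly of the reduction to mean estimation (already worked out in Section~\ref{sec:proper-vs-improper}) with the black-box deviation bound~\eqref{eq:robust-mean-deviation-bound}. The only points requiring minor care are the well-definedness of the objects involved --- that $\E U$ exists and $\cov(U)$ is finite, both guaranteed by Assumption~\ref{assumption:y-given-x} --- and the fact that invertibility and knowledge of $\Sigma$ are used in an essential way (they are dispensed with only in Section~\ref{sec:general-case}).
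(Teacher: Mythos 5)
Your proposal is correct and follows exactly the paper's argument: the paper's proof likewise combines the covariance bound $\cov(Y\Sigma^{-1/2}X) \mleq m^2 I_d$ from Section~\ref{sec:proper-vs-improper} with the Lugosi--Mendelson deviation bound~\eqref{eq:robust-mean-deviation-bound} and the excess risk identity~\eqref{eq:ex-risk-ident}. Your write-up simply makes the same three-step assembly explicit.
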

\begin{proof}
  We have shown in Section~\ref{sec:proper-vs-improper}, that under
  Assumption~\ref{assumption:y-given-x}, $\cov (Y\Sigma^{-1/2} X) \mleq m^2
  I_d$. Combining the deviation bound \eqref{eq:robust-mean-deviation-bound}
  and the definition \eqref{eq:robust-projection}
  with the identity \eqref{eq:ex-risk-ident}, we finish the proof.
\end{proof}

The above result serves as a benchmark result for the performance that we aim
to establish in the more realistic setting where the covariance matrix $\Sigma$ is unknown. This is achieved in the next section.

\subsection{Deviation-optimal robust estimator 
}
\label{sec:general-case}

The theorem below is the main positive result of our paper. It demonstrates that
Assumption~\ref{assumption:y-given-x} is a sufficient condition for the
existence of linear regression estimators satisfying an excess risk deviation
inequality with logarithmic dependence on the confidence parameter.
In Section~\ref{sec:statistical-lower-bound}, we
show that Assumption~\ref{assumption:y-given-x} is also necessary.

\begin{theorem}
  \label{thm:aggregalgo} There is an absolute constant $c > 0$ such that the following holds.
  Assume that $n \ge d$. Suppose that Assumption~\ref{assumption:y-given-x} holds and fix any
  $\delta \in (0,1)$. Then, there exists an estimator $\widehat{g}$ depending
  on $\delta$ and $m$ such that the following holds:
  \begin{equation*} \P\left(R(\wh g) -
    \inf\limits_{g \in \Flin}R(g) \le c\, \frac{m^2 (d\log(n/d) + \log
    (1/\delta))}{n}\right) \ge 1 - \delta.
  \end{equation*}
  Moreover, the above bound also holds if the class $\Flin$ is replaced by an arbitrary VC-subgraph class $\mathcal{F}$ of dimension $d$.
\end{theorem}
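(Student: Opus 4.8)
The plan is to build the estimator in three conceptual layers: (i) reduce the unknown-covariance problem to a finite list of candidate predictors via a covering/skeleton argument, (ii) among those candidates, select one robustly via a median-of-means tournament that tolerates the heavy-tailed response, and (iii) control the error of the whole scheme with a union bound that is affordable because the candidate list has cardinality only exponential in $d\log(n/d)$.

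First I would address the covariance. Since $\Sigma$ is unknown, the warm-up estimator of Proposition~\ref{prop:robust-subgaussian} is unavailable directly; instead I split the sample into two halves. On the first half I estimate the covariance structure well enough to carry out the reduction --- more precisely, I use the empirical second-moment matrix $\wh\Sigma$ on the first half and work in the pseudo-metric it induces. The key observation, going back to the analysis of skeleton estimators \cite[Section 28.3]{devroye2013probabilistic} and \cite{rakhlin2017empirical}, is that for predicting at the $n$ design points it suffices to discretize the linear class in the empirical $L^2(\wh\Sigma)$ geometry: the truncated linear functions $\langle w,\cdot\rangle$ clipped to $[-m,m]$ form, when restricted to the sample, a set of $n$-dimensional vectors living in a $d$-dimensional subspace with sup-norm at most $m$, so a standard volumetric estimate yields an $\eps$-net of size $(Cm/\eps)^{d}$ in the empirical metric. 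Choosing $\eps \asymp m\sqrt{d/n}$ makes the discretization error $O(m^2 d/n)$ while keeping $\log|\mathcal N| = O(d\log(n/d))$. This is exactly where the $\log(n/d)$ factor in the theorem enters, and it is also the step that generalizes verbatim to an arbitrary VC-subgraph class $\mathcal F$ of dimension $d$: one invokes the standard $\eps$-net bound for VC-subgraph classes (entropy $O(d\log(1/\eps))$ in empirical $L^2$) in place of the volumetric argument, and the rest of the proof does not see the linear structure.

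Second, on the second half of the sample I run a median-of-means tournament à la Lugosi and Mendelson \cite{lugosi2019risk} over the net: partition the data into $K \asymp \log(1/\delta)$ blocks, and for each ordered pair of candidates $(f,g)$ declare $f$ to beat $g$ if on a majority of blocks the block-empirical excess loss of $f$ relative to $g$ is negative (using the truncated, hence bounded by $4m^2$, loss increments so that blockwise concentration only needs the second moment, which Assumption~\ref{assumption:y-given-x} supplies); output any candidate that is not beaten by too many others, i.e. a min-max choice in the spirit of \cite{audibert2011robust,lecue2020robust}. The standard tournament analysis shows that, with probability $1-\delta$, every candidate whose true excess risk exceeds $c\,m^2(d\log(n/d)+\log(1/\delta))/n$ is beaten by the near-optimal candidate, so the selected predictor has excess risk of that order; the truncation is what keeps the per-block deviations sub-exponential despite only a second-moment assumption on $Y$, and it is also why the final estimator must be improper --- the clipped functions are not in $\Flin$, consistent with the lower bound \eqref{eq:lowerbound-proper-shamir}. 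One must be slightly careful that the excess risk is measured against $\inf_{g\in\Flin}R(g)$ rather than against the best net element; this is handled by the approximation bound from step one together with the fact, used in the proof of Theorem~\ref{thm:clipped-erm-loo-bound}, that clipping a predictor to $[-m,m]$ never increases its risk since $|\freg|\le m$ almost surely.

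The main obstacle is the interaction between the two halves: the net built on the first half is data-dependent, yet the tournament on the second half must be analyzed conditionally on that net, and one needs the covariance proxy $\wh\Sigma$ from the first half to be faithful enough --- in a purely distribution-free sense --- that ``small empirical excess risk on the net'' still translates to small true excess risk. Unlike in the bounded-covariate or moment-equivalence settings, we cannot invoke any lower-tail control on $\wh\Sigma$; the resolution is to never use $\wh\Sigma$ as an estimate of $\Sigma$ at all, but only to define the net, and to let the tournament itself (which compares true risks through block medians) do all the statistical work, so that the first half contributes only through the cardinality of the net and not through any concentration claim about $\Sigma$. Making this decoupling airtight --- in particular, verifying that conditionally on the first half the second-half data are still i.i.d.\ with the same conditional second-moment bound, and that the net, though random, is measurable and of the claimed size --- is the delicate bookkeeping that the full proof must carry out.
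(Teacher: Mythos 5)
Your overall architecture (truncate, discretize to a net of log-cardinality $d\log(n/d)$, then select robustly via median-of-means comparisons) matches the paper's skeleton, but there is a genuine gap at the selection step. You claim that ``the standard tournament analysis shows that \ldots the selected predictor has excess risk of that order,'' i.e.\ that a min-max/majority-vote selection \emph{over the net itself} achieves the fast rate $m^2(d\log(n/d)+\log(1/\delta))/n$. This is false in general: the net is a finite, non-convex set, so the Bernstein-type comparison bounds only give $R(\wh g)-R(g^*)\lesssim \alpha\sqrt{\E(\wh g(X)-g^*(X))^2}$ with $\alpha \asymp \sqrt{(d\log(n/d)+\log(1/\delta))/n}$, and without convexity there is no way to absorb the variance proxy $\E(\wh g-g^*)^2$ into the excess risk. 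Any procedure that merely \emph{selects} an element of a finite dictionary is subject to the $\sqrt{\log|\F|/n}$ deviation lower bound for model selection aggregation; this is precisely the obstruction that forces the paper to insert an extra aggregation layer. Concretely, the paper first prunes the net to a random subset $\wh\F$ of candidates that survive pairwise MOM tests, then enlarges it to the set of midpoints $\wh\F_+=(\wh\F+\wh\F)/2$ (the star-algorithm / two-step aggregation idea), and runs the min-max MOM selection over $\wh\F_+$ on a third, independent sample. The parallelogram identity $R((g+h)/2)=\tfrac12 R(g)+\tfrac12 R(h)-\tfrac14\E(g(X)-h(X))^2$ then shows that if the $L_2$ diameter $\mathcal D$ of $\wh\F$ is large, some midpoint improves on the best net element by $\mathcal D^2/16$, which exactly offsets the $\alpha\mathcal D$ slack from the comparison bounds and yields the $O(\alpha^2)$ rate. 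Your two-way split and single tournament cannot produce this cancellation; the midpoint step (and hence a three-way sample split) is essential, and it is also the reason the final estimator is improper beyond mere truncation.

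Two secondary issues. First, your block count $K\asymp\log(1/\delta)$ is too small: since the concentration of the MOM increments must hold uniformly over all pairs in a net of cardinality $e^{cd\log(n/d)}$ via a union bound, one needs $k\asymp d\log(n/d)+\log(1/\delta)$ blocks (the paper's choice); with only $\log(1/\delta)$ blocks the per-pair failure probability cannot beat $|\mathcal N|^{-2}$. Second, the net is built in an \emph{empirical} metric on the first subsample while the approximation error must be measured in the \emph{population} $L_1$/$L_2$ metric; you acknowledge this tension but do not resolve it. The paper handles it with a dedicated distribution-free localization lemma (Lemma~\ref{lem:lonedist}, proved via local Rademacher complexities and the covering bounds for truncated VC-subgraph classes) showing $\E|f-g|\le \frac2n\sum_i|f(X_i)-g(X_i)|+O(m(d\log(n/d)+\log(1/\delta))/n)$ uniformly over the truncated class; some such uniform empirical-to-population transfer is indispensable and is not supplied by the tournament itself.
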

Before presenting our estimator, we briefly comment on the above theorem.
First, in contrast to existing work on robust linear regression, our estimator
$\wh g$ is improper, even though the underlying linear class is convex.
Second, unlike our previous results presented in this paper,
the bound of Theorem~\ref{thm:aggregalgo} is not specific to the linear class.
In particular, our proof extends without changes
to the family of VC-subgraph classes
(see \cite[Definition 3.6.8]{gine2016mathematical}).
Some recent results in the robust statistics literature apply to
more general classes of functions, including
non-parametric classes (see, for example, \cite{lugosi2019risk,
mendelson2019unrestricted, chinot2019robust}). However, as discussed in
Section~\ref{sec:related-work}, such results are only known to be valid under
the existence of assumptions on $P_{X}$.  Extending our results for more general
classes presents some
challenges; we discuss them in
more detail in Section~\ref{sec:extensions}.
Finally, we note that our estimator depends on the value of $m$.
This assumption simplifies the analysis and is standard in similar contexts (see, for example, \cite[Theorem 11.3]{gyorfi2002nonparametric} and
\cite{mendelson2020bounded}).

We now introduce some additional notation needed to define our estimator.
For any $\varepsilon > 0$ and any class of real-valued functions $\G$ let
$\G_{\varepsilon}$ denote the smallest $\varepsilon$-net of $\G$ with respect
to the empirical $L_1$ distance $\frac{1}{n}\sum_{i=1}^{n}|f(X_{i}) -
g(X_{i})|$.
We only consider $\varepsilon$-nets that are subsets of $\G$.
For the standard definition of an $\varepsilon$-net we refer to \cite[Section 4.2]{Vershynin2016HDP}.
Assume that we have a sample $S = (X_i, Y_i)_{i = 1}^{3n}$
of size $3n$ and denote $S_1 = (X_i, Y_i)_{i = 1}^{n}$, $S_2  = (X_i, Y_i)_{i = n + 1}^{2n}$
and $S_3 = (X_i, Y_i)_{i = 2n + 1}^{3n}$.
Fix any $1 \leq k \leq n$, and assume without loss of generality
that $n / k$ is integer.  Split the set $\{1, \ldots, n\}$ into $k$ blocks $I_1,
\ldots, I_{k}$ of equal size such that $I_j = \{1 + (j - 1)(n/k), \ldots, j
(n/k)\}$.  Fix any function $\ell: \R^{d} \times \R \to \R$, any sample
$S'$ of size $n$, and denote the $i$-th element of $S'$ by $Z_{i} = (X_{i}, Y_{i})$.
The median-of-means estimator
(see also
\cite[Section 2.1]{lugosi2019mean}, \cite{nemirovsky1983problem})
is defined as follows:
\[
  \MOM_{S'}^{k}(\ell) =
  \mathrm{Median} \bigg(\frac{k}{n} \sum\limits_{i \in I_1} \ell(Z_i), \ldots,
  \frac{k}{n} \sum\limits_{i \in I_k} \ell(Z_i)\bigg).
\]
Finally, for any predictor $f : \R^{d} \to \R$, denote the associated loss
function by $\ell_f(Z_{i}) = (f(X_{i}) - Y_{i})^{2}$. We are now ready to
present our estimator.

\begin{framed}
\textbf{The estimator of Theorem \ref{thm:aggregalgo}}
\begin{enumerate}
  \item Split the sample $S$ of size $3n$ into three equal parts $S_1, S_2$
    and $S_3$ as defined above. Use the value $m$ to construct the truncated class
    \begin{equation}
      \label{eq:truncatedclass}
      \overline{\F} = \Big\{ f_{m} : f \in \Flin \Big\},
    \end{equation}
    where recall that $f_{m}$ denotes the truncation of a function $f$ (see
    \eqref{eq:def-truncated-least-squares}).
\item Fix $\varepsilon = \frac{md}{n}$.
  Using the first sample $S_1$, construct an $\varepsilon$-net of $\overline{\F}$
  with respect to the empirical $L_1$ distance and denote it by $\overline{\F}_{\varepsilon}$.
\item Let $c_{1}, c_{2} > 0$ be some specifically chosen absolute constants.
  Fix the number of blocks $k = \lceil c_{1} d(\log(n/d) + \log(1/\delta))\rceil$
  and set $\alpha = c_{2}\sqrt{\frac{m^2(d\log(n/d) + \log (1/\delta))}{n}}$. If $k > n $, then set $\wh g = 0$. Otherwise,
  using the second sample $S_2$ define a random subset of
  $\overline{\F}_{\varepsilon}$ as follows:
  \begin{align*}
    \wh \F &= \Bigl\{f \in \overline{\F}_{\varepsilon}: \forall g
      \in \overline{\F}_{\varepsilon},\
      \MOM_{S_2}^{k}\left(\ell_{f} - \ell_{g}\right)
      \le \alpha\sqrt{\frac{1}{n}\sum_{X_{i} \in S_{2}} (f(X_i) -
    g(X_i))^2} + \alpha^{2} \Bigr\}.
  \end{align*}
\item Define the set $\wh \F_{+}$ consisting of all the mid-points of $\wh \F$,
  that is, $\wh \F_{+}  = (\wh \F + \wh \F)/2 $. Using the third sample $S_3$,
  define our estimator $\wh g$ as
  \[
    \wh g = \argmin_{g \in \wh\F_{+} }\max\limits_{f \in \wh\F_{+}
    }\MOM_{S_{3}}^{k}\left(\ell_{g} - \ell_{f} \right).
  \]
\item Return $\wh g$.
\end{enumerate}
\end{framed}

Our estimator involves a combination of several seemingly disconnected ideas in the
literature.  The truncation step is inspired by the analysis in \cite[Chapter
11]{gyorfi2002nonparametric}, with the difference that we use the
truncation as a preliminary step, rather than as a post-processing of the ERM prediction
(see Theorem~\ref{thm:clipped-erm-loo-bound}).
The second step replaces the original class by an empirical $L_1$
$\varepsilon$-net of the truncated class.  In many
situations, such a construction leads to suboptimal results.
However, since we work with a particular parametric class, this step does not affect the
resulting performance.  The use of the $\varepsilon$-net
$\overline{\F}_{\varepsilon}$ is needed for
technical reasons; we explain the technical aspects in detail in Section~\ref{sec:extensions}.
Our third step is inspired by the median-of-means tournaments introduced in
\cite{lugosi2019risk}.  The main difference with the latter work is that our
truncated class is now non-convex, and to obtain the correct rates of convergence,
we need to adapt the arguments used in the model selection aggregation
literature.  This motivates our fourth step that can be seen as an adaptation of the \emph{star algorithm} \cite{audibert2008deviation} and the two-step
aggregation procedure developed in \cite{lecue2009aggregation, mendelson2017aggregation} to our specific
heavy-tailed setting combined with the idea of min-max formulation of robust estimators \cite{audibert2011robust, lecue2020robust}.
We remark that the idea of combining model selection
aggregation techniques with the median-of-means tournaments has also
recently appeared in \cite{mendelson2019unrestricted}, but under different assumptions.
As we mentioned, the key distinction therein is that the suggested learning procedure collapses to
a proper estimator for convex classes of functions, such as $\Flin$
considered in our work; as discussed in Section~\ref{sec:proper-vs-improper},
for such procedures some restrictions on the distribution of covariates
are required to obtain performance bounds.

The rest of this section is devoted to proving
Theorem~\ref{thm:aggregalgo}. First,
the truncation at the level $m$ can only make the
risk smaller whenever Assumption~\ref{assumption:y-given-x} is satisfied. Indeed, this follows from the identity
\[
R(g) = \E(g(X) - \freg(X))^2 + \E(\freg(X) - Y)^2,
\]
and the fact that $\freg$ is absolutely bounded by $m$.
Therefore, we may focus on bounding
\[
  R(\wh g) - \inf_{g \in \overline{\F}}R(g).
\]
We will now state and comment on some technical lemmas that will be used in our
proof. The proofs of the below lemmas are deferred to
Section~\ref{section:deferred-proof}.

Next, we provide a uniform deviation bound on the $L_{1}$
distances between the elements of $\overline{\mathcal{F}}$. 
\begin{lemma}
  \label{lem:lonedist}
  Assume that $n \ge d$. There is a constant $c > 0$ such that simultaneously for all $f, g \in \overline{\F}$, with probability at least $1 - \delta$, it holds that
  \[
    \E|f(X) - g(X)|
    \le \frac{2}{n}\sum\limits_{i = 1}^n |f(X_i) - g(X_i)| +
    c\left(\frac{m d\log(n/d) + m\log(3/\delta)}{n}\right).
  \]
\end{lemma}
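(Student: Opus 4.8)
The plan is to control, uniformly over all pairs $f, g \in \overline{\F}$, the deviation of the empirical $L_1$ distance from its population counterpart. The natural object is the function class $\overline{\F} - \overline{\F} = \{ f - g : f, g \in \overline{\F} \}$, or rather the class of absolute values $\mathcal{H} = \{ |f - g| : f, g \in \overline{\F} \}$. Every element of $\mathcal{H}$ is bounded by $2m$ pointwise, since each $f \in \overline{\F}$ takes values in $[-m, m]$. Because $\Flin$ is a $d$-dimensional vector space of functions, it is a VC-subgraph class of dimension at most $d+1$; truncation to $[-m,m]$ does not increase the VC-subgraph dimension, and neither the pairwise differences nor their absolute values blow up the combinatorial complexity beyond an absolute multiple of $d$ (one can pass through the subgraph / pseudo-dimension and use that VC-subgraph dimension is stable, up to constants, under these operations, cf.\ \cite[Section 3.6]{gine2016mathematical}). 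Hence $\mathcal{H}$ is a uniformly bounded VC-subgraph class of dimension $O(d)$.

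The next step is a standard bounded-differences / symmetrization argument. By McDiarmid's inequality applied to the supremum $Z = \sup_{h \in \mathcal{H}} \big( \E h(X) - \tfrac1n \sum_i h(X_i) \big)$, whose coordinates each change $Z$ by at most $2m/n$, we get $Z \le \E Z + 2m\sqrt{\log(3/\delta)/(2n)}$ with probability at least $1 - \delta$ (I would use a one-sided version, or adjust the constant for the two-sided statement; the lemma only needs the one-sided bound with $\log(3/\delta)$). It then remains to bound $\E Z$ by $O\big( m d \log(n/d) / n \big)$. This is where the VC-subgraph structure enters: by symmetrization $\E Z \le 2 \E \sup_{h \in \mathcal{H}} \tfrac1n \sum_i \eps_i h(X_i)$, and then by a Dudley chaining (or a direct uniform-entropy bound) using that the empirical $L_2$-covering numbers of a bounded VC-subgraph class of dimension $O(d)$ satisfy $\log N(\mathcal{H}, L_2(P_n), t) \lesssim d \log(m/t)$. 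Chaining and integrating, using that $\E h^2 \le 2m \E |h|$ is not even needed here because we can just use the crude diameter $2m$, yields $\E Z \lesssim m\sqrt{d/n} \cdot \sqrt{\log(n/d)}$, which is $\lesssim m d\log(n/d)/n$ precisely when $n \ge d$ (in the regime $n < d$ the bound would be vacuous anyway). Actually a slightly cleaner route, to land exactly on the stated $m d \log(n/d)/n$ rate without the square root, is to invoke a refined bound for VC-subgraph classes of the form $\E \sup_{h} \big| \E h - \tfrac1n\sum h \big| \lesssim m\big(\tfrac{d\log(n/d)}{n} + \sqrt{\tfrac{d\log(n/d)}{n}\cdot \tfrac{\E h^2}{m^2}}\big)$; combined with $\E h^2 \le 2m\E|h|$ and solving the resulting quadratic inequality one obtains $\E|h| \le \tfrac2n\sum|h(X_i)| + O(md\log(n/d)/n)$, which is exactly the form in the statement with the factor $2$ in front of the empirical average. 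I would present this ``offset'' version, as it matches the lemma verbatim.

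The main obstacle is bookkeeping the complexity of $\mathcal{H}$: one has to argue carefully that $\{|f-g| : f,g \in \overline{\F}\}$ is still VC-subgraph of dimension $O(d)$, since taking differences of functions and then absolute values are not operations that trivially preserve VC-subgraph classes — but they do preserve them up to absolute constant factors in the dimension, because (i) $\Flin$ has pseudo-dimension $d$, (ii) $\Flin + \Flin = \Flin$ (linear functions are closed under sums, so differences of linear functions are linear), so $\overline{\F} - \overline{\F} \subseteq \{ h_m' : h \in \Flin \}$ with $h$ bounded by $2m$, i.e.\ the difference class is again a truncation of a $d$-dimensional linear space, and (iii) $|{\cdot}|$ composed with a fixed-dimension class multiplies the VC-subgraph dimension by at most a universal constant. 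Once this is in place, everything else is a routine application of concentration plus uniform-entropy chaining, and the constraint $n \ge d$ is exactly what is needed to absorb the chaining integral into the claimed rate.
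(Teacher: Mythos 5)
Your overall architecture (control the class $\{|f-g|\colon f,g\in\overline{\F}\}$ via covering numbers, exploit $\E h^2 \lesssim m\,\E h$, and land on an ``offset'' inequality with the factor $2$) is the right one, and it is close in spirit to the paper's proof. However, there are two genuine gaps. First, your primary route (McDiarmid plus global symmetrization and Dudley chaining with the crude diameter $2m$) cannot yield the claimed rate: it produces $\E Z \lesssim m\sqrt{d\log(n/d)/n}$ and a confidence term $m\sqrt{\log(1/\delta)/n}$, and your assertion that $m\sqrt{d\log(n/d)/n}\lesssim m\,d\log(n/d)/n$ when $n\ge d$ is backwards --- for $n\ge d$ one has $\sqrt{d/n}\ge d/n$, so the chaining bound is the \emph{larger} quantity. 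The fast $1/n$ rate in both the complexity and the $\log(1/\delta)$ terms is exactly what forces a localization argument. Your fallback ``refined bound'' gestures at this, but as written it is not a valid inequality (the variance proxy $\E h^2$ sits inside a supremum over $h$), and making it rigorous is the actual technical content: the paper forms the \emph{star-hull} of $|\overline{\F}-\overline{\F}|$ (star-shapedness is needed for the sub-root property), applies the local Rademacher complexity theorem of Bartlett--Bousquet--Mendelson to the localized subsets $\H_r$, bounds the localized empirical Rademacher complexity by Dudley's integral truncated at radius $\sqrt{r}$, and solves the fixed-point equation $\widehat\psi_n(\widehat r^*)=\widehat r^*$ to get $\widehat r^*\lesssim (d\log(n/d)+\log(1/\delta))/n$. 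None of these steps appear in your plan.

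Second, your bookkeeping of the complexity of the difference class contains an error: $\overline{\F}-\overline{\F}$ is \emph{not} ``again a truncation of a $d$-dimensional linear space,'' because truncation does not commute with subtraction (e.g.\ with $m=1$, $f\equiv 10$, $g\equiv 9$ give $f_m-g_m\equiv 0$ while $(f-g)_m\equiv 1$). The paper avoids this issue entirely by working at the level of covering numbers: a $\gamma/2$-cover of $\overline{\F}-\overline{\F}$ is obtained as a Minkowski sum of $\gamma/4$-covers of $\overline{\F}$, the truncated class $\overline{\F}$ is covered by splitting into positive and negative parts and invoking the Gy\"orfi--Kohler--Krzy\.zak--Walk bounds for truncated VC-subgraph classes, the star-hull costs only an extra factor $4/\gamma$, and $|\cdot|$ is $1$-Lipschitz. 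This gives $\log N(\H,\gamma)\lesssim d\log(e/\gamma)$ without ever needing $\H$ itself to be VC-subgraph. Your plan would be repaired by replacing the VC-dimension-preservation claims with this covering-number calculus and by replacing the McDiarmid-plus-global-chaining step with the localization machinery.
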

To simplify the statements of the lemmas to follow, for any finite class
$\mathcal{G}$ and for any confidence parameter $\delta \in (0,1)$ define:
\begin{equation}
  \label{eq:alpha-definition}
  \alpha(\mathcal{G}, \delta) = 32\sqrt{
    \frac{m^{2}(\log(2|\mathcal{G}|) +
  \log(4/\delta))}{n}},
\end{equation}
where the sample size $n$ and the value $m$ (of
Assumption~\ref{assumption:y-given-x}) will always be clear from the context.
The next technical lemma provides basic concentration properties of the median-of-means estimators, the proof of which follows from
a combination of uniform
Bernstein's inequality and a median-of-means deviation inequality for mean
estimation \cite[Theorem 2]{lugosi2019mean}.

\begin{lemma}
  \label{lem:concent}
  Suppose that Assumption~\ref{assumption:y-given-x} holds and let $S_{n} =
  (X_{i}, Y_{i})_{i=1}^{n}$ denote an \iid sample.
  Let $\mathcal{G}$ be any finite class of functions whose absolute value is
  bounded by $m$. Fix any $\delta \in (0,1)$, let
  $k=\lceil8\log\frac{2 |\G|^{2}}{\delta}\rceil$ and let $\alpha$ denote any
  upper bound on $\alpha(\mathcal{G}, \delta)$ defined in
  \eqref{eq:alpha-definition}. Then, with probability at
  least $1-\delta$, the following inequalities hold simultaneously
  for any $f,g\in \mathcal{G}$:
  \begin{align}
    \label{eq:secondeq}
    \left|R(f) - R(g) - \MOM^{k}_{S_{n}}\left(\ell_{f}- \ell_{g}\right)\right|
    &\le \alpha\sqrt{\E(f(X) - g(X))^2},
    \\
    \label{eq:firsteq} 
    \left|R(f) - R(g) - \MOM^{k}_{S_{n}}\left(\ell_{f} - \ell_{g}\right)\right|
    &\le  \sqrt{2}\alpha\sqrt{\frac{1}{n}\sum\limits_{i = 1}^n(f(X_i) - g(X_i))^2}
    + \alpha^2,
    \\
    \frac{1}{n}\sum_{i=1}^{n}(f(X_{i}) - g(X_{i}))^{2}
    &\leq 2\E(f(X) - g(X))^{2} + \alpha^{2}.
  \end{align}
\end{lemma}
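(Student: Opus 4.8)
The plan is to establish the three inequalities on the intersection of two high-probability events, allotting confidence $\delta/2$ to each: one controlling the median-of-means estimators (which yields~\eqref{eq:secondeq}), and one controlling empirical versus population $L_2$ seminorms via a uniform Bernstein inequality (which yields the third, unlabeled inequality of the statement). The mixed bound~\eqref{eq:firsteq} will then follow by combining the two.

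\emph{Step 1 (median-of-means: proof of~\eqref{eq:secondeq}).} Fix $f, g \in \mathcal G$ and write $\ell_f(Z) - \ell_g(Z) = (f(X) - g(X))\bigl(f(X) + g(X) - 2Y\bigr)$ with $Z = (X,Y)$, whose expectation is $R(f) - R(g)$; conditioning on $X$, using $(x - y)^2 \le 2 x^2 + 2 y^2$, the bound $|f|, |g| \le m$ on $\mathcal G$, and Assumption~\ref{assumption:y-given-x}, one gets $\E[(\ell_f(Z) - \ell_g(Z))^2 \mid X] \le 16 m^2 (f(X) - g(X))^2$, hence $\Var(\ell_f - \ell_g) \le 16 m^2\, \E(f(X) - g(X))^2$. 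I would then apply the one-dimensional median-of-means tail bound of \cite[Theorem~2]{lugosi2019mean} to $\ell_f(Z) - \ell_g(Z)$: with $k$ blocks on a sample of size $n$, its median-of-means estimator deviates from $R(f) - R(g)$ by more than $2\sqrt{\Var(\ell_f - \ell_g)}\,\sqrt{k/n}$ with probability at most $e^{-k/8}$. A union bound over the at most $|\mathcal G|^2$ pairs, with $k = \lceil 8\log(2|\mathcal G|^2/\delta)\rceil$ so that $|\mathcal G|^2 e^{-k/8} \le \delta/2$, gives that with probability at least $1 - \delta/2$, simultaneously for all $f,g$,
\[
  \bigl|R(f) - R(g) - \MOM_{S_n}^k(\ell_f - \ell_g)\bigr| \le 8 m\,\sqrt{k/n}\,\sqrt{\E(f(X) - g(X))^2} ;
\]
it then remains to verify the numerical inequality $8 m \sqrt{k/n} \le \alpha(\mathcal G, \delta)$, which follows from $k \le 8\log(2|\mathcal G|^2/\delta) + 1 \le 16\bigl(\log(2|\mathcal G|) + \log(4/\delta)\bigr)$ and the definition~\eqref{eq:alpha-definition}. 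As $\alpha \ge \alpha(\mathcal G, \delta)$, this is~\eqref{eq:secondeq}.

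\emph{Step 2 (Bernstein: empirical versus population $L_2$).} The random variable $(f(X) - g(X))^2$ lies in $[0, 4 m^2]$ and, since $w^2 \le 4 m^2 w$ for $w \in [0, 4 m^2]$, has variance at most $4 m^2\, \E(f(X) - g(X))^2$. Bernstein's inequality, a union bound over the $|\mathcal G|^2$ pairs with the remaining $\delta/2$, and the elementary split $\sqrt{xy} \le (x + y)/2$ applied to the variance term would yield that with probability at least $1 - \delta/2$, for all $f, g$,
\[
  \Bigl|\frac{1}{n}\sum_{i=1}^n (f(X_i) - g(X_i))^2 - \E(f(X) - g(X))^2\Bigr| \le \frac{1}{2}\,\E(f(X) - g(X))^2 + C\,\frac{m^2\bigl(\log(2|\mathcal G|) + \log(4/\delta)\bigr)}{n}
\]
for an absolute constant $C$. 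Since $\alpha^2 \ge \alpha(\mathcal G, \delta)^2$ dominates the last term, rearranging yields simultaneously the third inequality of the lemma and the reverse comparison $\E(f(X) - g(X))^2 \le 2 \cdot \frac{1}{n}\sum_{i=1}^n(f(X_i) - g(X_i))^2 + \alpha^2$.

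\emph{Step 3 (combining, and the main obstacle).} On the intersection of the two events above, which has probability at least $1 - \delta$, I would combine $\sqrt{\E(f(X) - g(X))^2} \le \sqrt2\,\bigl(\frac{1}{n}\sum_{i=1}^n(f(X_i) - g(X_i))^2\bigr)^{1/2} + \alpha$ from Step 2 with~\eqref{eq:secondeq} to obtain $|R(f) - R(g) - \MOM_{S_n}^k(\ell_f - \ell_g)| \le \alpha\sqrt{\E(f(X) - g(X))^2} \le \sqrt2\,\alpha\bigl(\frac{1}{n}\sum_{i=1}^n(f(X_i) - g(X_i))^2\bigr)^{1/2} + \alpha^2$, i.e.~\eqref{eq:firsteq}. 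The conceptual content here is light; the genuinely delicate point is the bookkeeping of absolute constants, so that the precise constant $32$ in~\eqref{eq:alpha-definition} — and likewise the factor $\sqrt2$ and the additive $\alpha^2$ in~\eqref{eq:firsteq} — come out exactly after the union bounds, the $e^{-k/8}$ median-of-means tail, and the arithmetic--geometric-mean splittings; in particular one must confirm $8 m\sqrt{k/n} \le \alpha(\mathcal G, \delta)$ for the stated $k$ and that every Bernstein remainder term is absorbed into $\alpha^2$.
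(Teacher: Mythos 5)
Your proposal is correct and follows essentially the same route as the paper's proof: the same variance bound $\Var(\ell_f-\ell_g)\le 16m^2\,\E(f(X)-g(X))^2$ fed into the median-of-means deviation bound of Lugosi--Mendelson with a union bound over pairs (budget $\delta/2$), a Bernstein-plus-AM--GM comparison of empirical and population $L_2$ seminorms on the remaining $\delta/2$, and the combination of the two via $\sqrt{a+b}\le\sqrt a+\sqrt b$ to get the mixed inequality. The constant bookkeeping you flag does work out exactly as you sketch (the paper normalizes $m=1$ first, which is the only cosmetic difference).
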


For any class $\mathcal{G}$, define its $L_{2}$ diameter by:
\[
  \mathcal{D}(\G) = \sup_{f, g \in \G}\sqrt{\E(f(X) - g(X))^2}.
\]
As a corollary of the above lemma, we are able to derive some basic properties
of the random set $\wh \F$. In particular, we show that with high probability
the set $\wh \F$ contains the population risk minimizer over the
$\varepsilon$-net $\overline{\F}_{\varepsilon}$. At the same time, we establish
a uniform Bernstein-type bound on the excess risk of the elements of $\wh
\F$, with the role of the variance term played by $\mathcal{D}(\wh \F)$.

\begin{lemma}
   \label{lem:excessrisk}
  Suppose that Assumption~\ref{assumption:y-given-x} holds and let $S_{n} =
  (X_{i}, Y_{i})_{i=1}^{n}$ denote an \iid sample.
  Let $\mathcal{G}$ be any finite class of functions whose absolute value is
  bounded by $m$.
  Fix any $\delta\in(0,1)$, $k=\lceil8\log\frac{2 |\G|^{2}}{\delta}\rceil$ and
  let $\alpha$ denote any upper bound on $\alpha(\mathcal{G}, \delta)$ defined in
  \eqref{eq:alpha-definition}.
  Define the random subset of
  $\mathcal{G}$ :
  \begin{align}
    \wh \G
    = \Bigg\{f \in \G: \textrm{for every}\ g \in \G,\ \MOM_{S_{n}}^{k}\left(\ell_{f} -
      \ell_{g}\right)
      \le \sqrt{2}\alpha\sqrt{\frac{1}{n}\sum_{i=1}^{n}(f(X_{i}) - g(X_{i}))^{2}}
    + \alpha^{2} \Bigg\},
  \end{align}
  Then, the following two
  conditions hold simultaneously, with probability at least $1-\delta$:
  \begin{enumerate}
    \item The function $g^{*} = \argmin_{g \in \G} R(g)$ belongs to the class
      $\wh \G$.
    \item For any $f,g \in \wh \G$, we have
      $R(f) - R(g^*) \leq 4\alpha\mathcal{D}(\wh
      \G) + 5\alpha^{2}$.
  \end{enumerate}
\end{lemma}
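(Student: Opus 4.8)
The plan is to fix, once and for all, the event of probability at least $1-\delta$ supplied by Lemma~\ref{lem:concent}, on which the three displayed inequalities hold simultaneously for all $f, g \in \G$; all reasoning below is deterministic given this event, so the two assertions will hold together on it. For the first claim I would take $g^{*} = \argmin_{g \in \G} R(g)$, fix an arbitrary $g \in \G$, and apply inequality~\eqref{eq:firsteq} of Lemma~\ref{lem:concent} to the pair $(g^{*}, g)$ to get
\[
\MOM^{k}_{S_n}(\ell_{g^{*}} - \ell_{g}) \le \big(R(g^{*}) - R(g)\big) + \sqrt{2}\,\alpha\sqrt{\tfrac1n\sum_{i=1}^{n} (g^{*}(X_i) - g(X_i))^{2}} + \alpha^{2}.
\]
Because $R(g^{*}) - R(g) \le 0$ by optimality of $g^{*}$, the right-hand side is at most $\sqrt{2}\,\alpha\sqrt{\tfrac1n\sum_{i} (g^{*}(X_i) - g(X_i))^{2}} + \alpha^{2}$, which is exactly the membership condition defining $\wh\G$; since $g$ was arbitrary, $g^{*} \in \wh\G$.

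For the second claim I would fix $f \in \wh\G$ and abbreviate $D = \sqrt{\E(f(X) - g^{*}(X))^{2}}$ and $\wh D = \sqrt{\tfrac1n\sum_{i=1}^{n} (f(X_i) - g^{*}(X_i))^{2}}$. Instantiating the inequality defining $\wh\G$ at $g = g^{*}$ bounds $\MOM^{k}_{S_n}(\ell_{f} - \ell_{g^{*}})$ by $\sqrt{2}\,\alpha\wh D + \alpha^{2}$, while inequality~\eqref{eq:secondeq} of Lemma~\ref{lem:concent} applied to $(f, g^{*})$ gives $R(f) - R(g^{*}) \le \MOM^{k}_{S_n}(\ell_{f} - \ell_{g^{*}}) + \alpha D$. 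Combining the two and then using the third display of Lemma~\ref{lem:concent} in the form $\wh D \le \sqrt{2}\,D + \alpha$ (which follows from $\wh D^{2} \le 2D^{2} + \alpha^{2}$ and subadditivity of the square root) gives
\[
R(f) - R(g^{*}) \le \sqrt{2}\,\alpha\wh D + \alpha^{2} + \alpha D \le 3\alpha D + (1 + \sqrt{2})\,\alpha^{2}.
\]
Since the first part already put $g^{*}$ into $\wh\G$ and $f \in \wh\G$, one has $D \le \mathcal{D}(\wh\G)$; using $3 \le 4$ and $1 + \sqrt{2} \le 5$ yields $R(f) - R(g^{*}) \le 4\alpha\mathcal{D}(\wh\G) + 5\alpha^{2}$, the claimed bound (and, as $R(g) \ge R(g^{*})$ for every $g \in \G$, the same inequality holds with $R(f) - R(g)$ in place of $R(f) - R(g^{*})$, for any $f, g \in \wh\G$).

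I do not expect a genuine obstacle here: the proof is purely a bookkeeping combination of the three estimates in Lemma~\ref{lem:concent}, and the constants $4$ and $5$ leave ample slack. The only two points needing care are the order of the argument — the first claim has to be established before the second, since replacing $D$ by $\mathcal{D}(\wh\G)$ uses $g^{*} \in \wh\G$ — and the controlled passage between the empirical and population $L_{2}$ distances of $f$ and $g^{*}$, which is precisely what the third display of Lemma~\ref{lem:concent} provides.
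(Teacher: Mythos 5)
Your proof is correct and takes essentially the same route as the paper's: work on the single event of Lemma~\ref{lem:concent}, place $g^{*}$ in $\wh \G$ using \eqref{eq:firsteq} together with $R(g^{*})-R(g)\le 0$, then combine the membership condition at $g=g^{*}$ with the concentration inequalities and the empirical-to-population $L_{2}$ comparison, finally bounding the distance by $\mathcal{D}(\wh\G)$ since both functions lie in $\wh\G$. The only cosmetic difference is that in the second step you invoke \eqref{eq:secondeq} where the paper reuses \eqref{eq:firsteq}, which merely changes the intermediate constants (yours are slightly sharper) and still lands under $4\alpha\mathcal{D}(\wh\G)+5\alpha^{2}$.
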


Finally, we prove an excess risk bound for the min-max estimator in terms of
the $L_{2}$ diameter of the set over which the estimator is computed.
The intuitive implications of the below lemma are the following. First,
if $\mathcal{D}(\wh \F)$ is of order $1/\sqrt{n}$, the bellow lemma immediately yields
the fast rate of convergence for our estimator $\wh g$.
If, on the other hand, the diameter of $\mathcal{D}$ is much larger than $1/\sqrt{n}$,
then we can exploit the curvature of the quadratic loss and the gain in the approximation error
(due to considering the larger class $\wh \F_{+}$ instead of $\wh \F$)
to prove the desired rate of convergence.

\begin{lemma}
  \label{lemma:minmax-mom-estimator}
  Suppose that Assumption~\ref{assumption:y-given-x} holds and let $S_{n} =
  (X_{i}, Y_{i})_{i=1}^{n}$ denote an \iid sample.
  Let $\mathcal{G}$ be any finite class of functions whose absolute value is
  bounded by $m$. Fix any $\delta \in (0,1)$, let
  $k=\lceil8\log\frac{2 |\G|^{2}}{\delta}\rceil$ and
  let $\alpha$ denote any upper bound on $\alpha(\mathcal{G}, \delta)$ defined in
  \eqref{eq:alpha-definition}.
  Let $\widehat{g}$ be any estimator satisfying
  $$
    \widehat{g}
    \in \argmin_{g \in \G} \max_{f \in \G}
    \MOM_{S_{n}}^{k}(\ell_{g} - \ell_{f}).
  $$
  Let $g^{*} \in \argmin_{g \in \G} R(g)$. Then, with probability at least
  $1-\delta$, it holds that
  \begin{align}
    R(\widehat{g}) \leq R(g^{*}) + 2\alpha\mathcal{D}(\G).
  \end{align}
\end{lemma}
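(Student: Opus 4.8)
The plan is to condition on the uniform comparison event supplied by Lemma~\ref{lem:concent} and then chase the min-max definition of $\widehat g$ through it. Concretely, with $k=\lceil 8\log(2|\G|^{2}/\delta)\rceil$ and $\alpha$ an upper bound on $\alpha(\G,\delta)$, inequality~\eqref{eq:secondeq} of Lemma~\ref{lem:concent} holds with probability at least $1-\delta$, and we work on this event throughout. It states that $|R(f)-R(g)-\MOM^{k}_{S_{n}}(\ell_{f}-\ell_{g})|\le\alpha\sqrt{\E(f(X)-g(X))^{2}}\le\alpha\,\mathcal{D}(\G)$ simultaneously for all $f,g\in\G$; in particular this applies to the data-dependent predictor $\widehat g\in\G$, which is precisely why uniformity over the whole class is essential.

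First I would control the min-max objective at the population minimizer $g^{*}\in\argmin_{g\in\G}R(g)$. For any $f\in\G$, inequality~\eqref{eq:secondeq} gives $\MOM^{k}_{S_{n}}(\ell_{g^{*}}-\ell_{f})\le R(g^{*})-R(f)+\alpha\,\mathcal{D}(\G)\le\alpha\,\mathcal{D}(\G)$, where the last step uses $R(g^{*})\le R(f)$ by optimality of $g^{*}$ within $\G$. Taking the maximum over $f\in\G$ shows that $g^{*}$ achieves a min-max value of at most $\alpha\,\mathcal{D}(\G)$. Since $\widehat g$ minimizes $g\mapsto\max_{f\in\G}\MOM^{k}_{S_{n}}(\ell_{g}-\ell_{f})$ over $\G$ and $g^{*}\in\G$, we deduce $\max_{f\in\G}\MOM^{k}_{S_{n}}(\ell_{\widehat g}-\ell_{f})\le\alpha\,\mathcal{D}(\G)$; specializing to $f=g^{*}$ yields $\MOM^{k}_{S_{n}}(\ell_{\widehat g}-\ell_{g^{*}})\le\alpha\,\mathcal{D}(\G)$.

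Finally I would translate this empirical bound back into a risk bound by applying inequality~\eqref{eq:secondeq} once more, now with the pair $(\widehat g,g^{*})$: $R(\widehat g)-R(g^{*})\le\MOM^{k}_{S_{n}}(\ell_{\widehat g}-\ell_{g^{*}})+\alpha\sqrt{\E(\widehat g(X)-g^{*}(X))^{2}}\le\alpha\,\mathcal{D}(\G)+\alpha\,\mathcal{D}(\G)=2\alpha\,\mathcal{D}(\G)$, which is the claimed inequality. There is no genuine obstacle here beyond the bookkeeping requirement that every comparison be taken on the single high-probability event of Lemma~\ref{lem:concent}; in particular the argument uses neither any antisymmetry of the median-of-means functional nor any further structure, only the two-sided deviation bound~\eqref{eq:secondeq} and the optimality of $g^{*}$ inside $\G$.
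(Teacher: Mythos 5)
Your proof is correct and follows essentially the same route as the paper's: both rely solely on the uniform deviation bound \eqref{eq:secondeq} of Lemma~\ref{lem:concent} (applied to the pairs $(g^*,f)$ and $(\widehat g, g^*)$) together with the min-max optimality of $\widehat g$ compared against $g^*$, each contributing one $\alpha\mathcal{D}(\G)$ term. The only difference is cosmetic ordering — you bound the min-max value at $g^*$ first and then transfer to risk, while the paper starts from a risk decomposition — so the two arguments are the same.
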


We are now ready to prove Theorem~\ref{thm:aggregalgo}.

\begin{proof}[Proof of Theorem~\ref{thm:aggregalgo}]
  Our proof is split into two parts. First, we approximate the truncated linear
  class $\overline{\F}$ with a finite class, namely, an empirical $L_{1}$
  $\varepsilon$-net constructed using the first third of the dataset denoted by
  $S_{1}$. Then, conditionally on $S_{1}$, we show that our estimator $\wh g$
  achieves the optimal rate of model selection aggregation over the finite class
  $\overline{\F}_{\varepsilon}$, in spite of the lack of assumptions on the covariates
  and the presence of heavy-tailed labels.
  Finally, we note that if the number of median-of-means blocks $k$ is equal to $0$ (\ie, $n \lesssim d(\log(n/d) + \log(1/\delta))$), then we
  may the $0$ function which satisfies the desired bound for such
  sample sizes. Thus, in what follows we assume that $n \gtrsim d(\log(n/d) +
  \log(1/\delta))$.

  \paragraph{The approximation step.}
  Recall that $\overline{\F}_{\varepsilon}$ is an empirical $L_1$
  $\varepsilon$-net of the truncated linear class $\overline{\F}$ constructed
  using the sample $S_{1}$.
  Let $f^* = \argmin_{f \in \overline{\F}} R(f)$ and let $f^{*}_{\varepsilon}$
  be any element of $\overline{\F}_{\varepsilon}$ minimizing the empirical $L_{1}$
  distance to $f^{*}$, that is, we have
  \begin{equation}
    \label{eq:eps-net-approximation-l1}
    \frac{1}{n}\sum_{X_i \in S_{1}}|f^{*}(X_i) - f^{*}_{\varepsilon}(X_i)| \leq
    \varepsilon.
  \end{equation}
  Let $E_{1}$ denote the event of Lemma~\ref{lem:lonedist} applied with respect
  to the sample $S_{1}$ (that contains $n$ points) with the choice of the confidence
  parameter set to $\delta/3$ (thus, $\P(E_{1}) \geq 1 - \delta/3$).
  It follows that on the event $E_{1}$ we have
  \begin{align*}
     &R(f^{*}_{\varepsilon}) - R(f^{*}) \\
    \quad&= 2\E Y(f^*(X) -  f^*_{\eps}(X)) + \E(f^*_{\eps}(X)^ 2 - f^*(X)^2) &
    \\
    \quad&\le 2\E(\E[Y|X](f^*(X) -  f^*_{\eps}(X))) + 2m\E|f^*_{\eps}(X) - f^*(X)|
      &(\text{since } |f^*_{\eps}(X) + f^*(X)| \le 2m)
    \\
    \quad&\le2\E(\sqrt{\E[Y^2|X]}|f^*(X) -  f^*_{\eps}(X)|) + 2m\E|f^*_{\eps}(X) - f^*(X)|
         &(\text{by Jensen's inequality})
    \\
    \quad&\le 4m\E|f^*_{\eps}(X) - f^*(X)|\quad  &(\text{by Assumption~\ref{assumption:y-given-x}})
    \\
    &\le 8m\varepsilon
    + 4mc_{1}\left(\frac{m d \log(n/d) + m\log(9/\delta)}{n}\right)
    &(\text{by \eqref{eq:eps-net-approximation-l1} and Lemma~\ref{lem:lonedist}})
    \\
    &\leq
    12c_{1}\left(\frac{m^{2} d \log(n/d) + m^{2}\log(9/\delta)}{n}\right),
    &(\text{by the definition of }\varepsilon)
  \end{align*}
  where $c_{1}$ is an absolute constant. Observe that on the event $E_{1}$,
  any estimator $\wh g$ satisfies
  \begin{align*}
    R(\wh g) - R(f^{*})
    &\leq R(\wh g) - \min_{f \in \overline{\F}_{\varepsilon}}R(f)
    + R(f^{*}_{\varepsilon}) - R(f^{*}) \\
    &\leq R(\wh g) - \min_{f \in \overline{\F}_{\varepsilon}}R(f)
    + 12c_{1}\left(\frac{m^{2} d \log(n/d) + m^{2}\log(9/\delta)}{n}\right).
  \end{align*}
  From this point onward, we work on the event $E_{1}$. It thus remains to
  prove that with probability $1-2\delta/3$, the estimator $\widehat{g}$
  computed using the remaining $2n$ points split into samples $S_{2}$ and $S_{3}$ satisfies
  \begin{equation}
    \label{eq:aggregation-goal-bound}
    R(\wh g) - \min\limits_{f \in \overline{\F}_{\varepsilon}} R(f)
    \lesssim  \frac{m^{2} d \log(n /d) + m^{2}\log(1/\delta)}{n}.
  \end{equation}
  Since $\overline{\F}_{\varepsilon}$ is a finite class of functions, we now turn
  to the aggregation part of this proof.

  \paragraph{The aggregation step.}
  By the $L_{2}$ covering number bound stated in
  \cite[Theorem 9.4, Theorem 9.5]{gyorfi2002nonparametric},
  which also holds for the empirical $L_1$ distances,
  we have (see the proof of Lemma~\ref{lem:lonedist})
  \[
    \log |\overline{\F}_{\varepsilon}|
    \lesssim d\log \frac{me}{\eps} \lesssim
    d\log (n/d).
  \]
  Note that $|\wh \F_{+}|$ and $|\wh \F|$ are simultaneously upper bounded by
  $|\overline{\F}_{\varepsilon}|^{2}$. For an arbitrary finite class $\G$, recall the
  definition of $\alpha(\mathcal{G}, \delta)$ stated in
  \eqref{eq:alpha-definition}. It follows that there exists some absolute
  constant $c_{2} > 0$ such that $\overline{\alpha}$ defined
  below satisfies
  \begin{equation}
    \label{eq:overline-alpha-dfn}
    \max\left(\alpha(\wh \F, \delta/3), \alpha(\wh \F_{+}, \delta/3)\right)
    \leq \overline{\alpha}
    = c_{2} \sqrt{\frac{m^2 d \log(n/d) + m^2\log(1/\delta)}{n}}.
  \end{equation}
  Thus, $\overline{\alpha}$ defined above will be used in the applications of
  Lemmas~\ref{lem:concent}, \ref{lem:excessrisk} and
  \ref{lemma:minmax-mom-estimator} to follow.

  Let $E_{2}$ be the event of
  Lemma~\ref{lem:excessrisk} applied for the set $\wh \F$ with confidence
  parameter $\delta/3$. In particular, on the event $E_{2}$ we have
  \begin{equation}
    \label{eq:event-E2}
    \argmin_{f \in \overline{\F}_{\varepsilon}} R (f) \in \wh \F,
    \quad\text{and for any }f \in \wh \F \text{ it holds that }
    R(f) \leq \min_{f \in \overline{\F}_{\varepsilon}} R(f) +
    4\overline{\alpha}\mathcal{D}(\wh \F) + 5\overline{\alpha}^{2}.
  \end{equation}

  Conditionally on the sample $S_{2}$, let the set $\wh \F$ defined in the third
  step of our algorithm be fixed. Denote $g^* = \argmin_{g \in \wh \F_{+}}R(f)$,
  where recall that $\wh \F_{+} = (\wh \F +
  \wh \F)/2$. Observe that the $L_{2}$ diameters of $\wh \F$ and $\wh \F_{+}$
  are equal, that is $\mathcal{D}(\wh \F_{+}) = \mathcal{D}(\wh \F)$.
  Let $E_{3}$ be the event of Lemma~\ref{lemma:minmax-mom-estimator} applied to
  the third part of our sample $S_{3}$ and the finite class $\wh \F_{+}$ with
  the confidence parameter set to $\delta/3$.
  Thus, on $E_{3}$ our estimator $\widehat{g}$ satisfies:
  \begin{equation}
  \label{eq:riskofhat}
    R(\wh{g}) \le  R(g^*)  + 2\overline{\alpha}\mathcal{D}(\wh \F).
  \end{equation}
  Now choose any $g, h \in \wh \F$ such that
  $\sqrt{\E(g(X) - h(X))^2} \ge \mathcal{D}(\wh \F)/2$ (such a choice always
  exists by definition of the diameter).
  Since $(g + h)/2 \in \wh \F_{+}$, the parallelogram identity yields
  \begin{align}
    R(g^*)
    &\le R((g + h)/2)
    \\
    &= \frac{1}{2}R(g) + \frac{1}{2}R(h) - \frac{1}{4}\E(g(X) - h(X))^2
    \\
    &\le \frac{1}{2}R(g) + \frac{1}{2}R(h) - \frac{1}{16}\mathcal{D}(\wh \F)^{2}.
    \label{eq:parallelogram}
  \end{align}
  On the event $E_{2}$, applying \eqref{eq:event-E2} for the functions $g$ and
  $h$ we obtain
  \begin{align*}
    \frac{1}{2}R(g) + \frac{1}{2}R(h)
    \leq \min_{f \in \overline{\F}_{\varepsilon}} R(f)
    + 4\overline{\alpha}\mathcal{D}(\wh \F) + 5\overline{\alpha}^{2}.
  \end{align*}
  Combining the above with equations \eqref{eq:riskofhat} and
  \eqref{eq:parallelogram} we have
  \begin{align}
    R(\wh g)
    - \min_{f \in \overline{\F}_{\varepsilon}} R(f)
    \leq 6\overline{\alpha}\mathcal{D}(\wh \F) + 5\overline{\alpha}^{2}
    - \frac{1}{16}\mathcal{D}(\wh \F)^{2}
    \leq 149\overline{\alpha}^{2},
  \end{align}
  where the last step follows by maximizing the quadratic equation with respect
  to $\mathcal{D}(\wh \F)$. Plugging in the definition of
  $\overline{\alpha}$ (see \eqref{eq:overline-alpha-dfn}) we obtain the desired inequality
  \eqref{eq:aggregation-goal-bound}. The proof is complete by taking the union
  bound over the events $E_{1}$, $E_{2}$ and $E_{3}$ defined above.
\end{proof}

\subsection{Some extensions of Theorem~\ref{thm:aggregalgo}}
\label{sec:extensions}

We begin by noting that Theorem~\ref{thm:aggregalgo} holds not only for linear
classes, but more generally, for VC-subgraph classes (without any changes to
our argument presented in the previous section). Indeed, the structure of the
underlying function class only enters our proof though the control on the
empirical covering numbers of its truncated elements; sharp bounds for such
covering numbers are available in \cite[Theorem 9.4]{gyorfi2002nonparametric}.
As a special case, our analysis
covers
finite classes and hence, provides new
results for the problem of model selection aggregation, where a learner is tasked with
constructing a predictor as good as the best one in a given finite class (also
called dictionary) of functions \cite{nemirovski2000topics, tsybakov2003optimal}.
It is arguably the most straightforward problem manifesting statistical separation between
proper and improper learning algorithms (see, for instance,
\cite{catoni2004statistical, juditsky2008learning}). Procedures based on
exponential weighting were shown to attain optimal rates in expectation
\cite{yang2000combining, yang2000mixing,
catoni2004statistical, audibert2009fastrates}, yet they were later shown to be deviation suboptimal
\cite{audibert2008deviation}, in close similarity to our results presented in
Section~\ref{sec:constant-probability-failure}.

We can now formulate the following result, which from the statistical point of
view, generalizes the best known results for the problem of model selection aggregation \cite{audibert2008deviation, lecue2009aggregation}.

\begin{theorem}
  \label{thm:finitedict}
  There is an absolute constant $c > 0$ such that the following holds. Grant Assumption~\ref{assumption:y-given-x}, fix any
  $\delta \in (0,1)$ and let $\F$ be a finite class of possibly
  unbounded functions.
  Then, there exists an estimator $\widehat{g}$ depending
  on $\delta$ and $m$ such that the following holds:
  \begin{equation*}
    \P \left( R(\wh g) -
    \min_{g \in \F} R(g)
    \leq c\, \frac{m^2 (\log|\F| + \log (1/\delta))}{n} \right)
  \ge 1 - \delta.
  \end{equation*}
\end{theorem}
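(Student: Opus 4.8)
The plan is to essentially rerun the proof of Theorem~\ref{thm:aggregalgo}, but skip the approximation step entirely: here $\F$ is already finite, so there is no need to construct an $\eps$-net using a separate subsample. Thus I would split the sample into only two halves $S_1$ and $S_2$ of size $n$ each (after relabeling, still writing $n$ for the per-stage size, at the cost of an absolute constant). The one genuine subtlety compared to Theorem~\ref{thm:aggregalgo} is that $\F$ may consist of unbounded functions, whereas Lemmas~\ref{lem:concent}, \ref{lem:excessrisk} and \ref{lemma:minmax-mom-estimator} are stated for classes bounded by $m$. So the first step is a truncation: replace $\F$ by $\overline\F = \{ f_m : f \in \F \}$, where $f_m$ denotes truncation to $[-m,m]$. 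Exactly as in the passage preceding Lemma~\ref{lem:lonedist}, the identity $R(g) = \E(g(X)-\freg(X))^2 + \E(\freg(X)-Y)^2$ together with $\sup_x |\freg(x)| \le m$ (which follows from Assumption~\ref{assumption:y-given-x} and Jensen) shows $R(f_m) \le R(f)$ for every $f$, hence $\min_{f \in \overline\F} R(f) \le \min_{f \in \F} R(f)$. Therefore it suffices to bound $R(\wh g) - \min_{f \in \overline\F} R(f)$, and now we are working with a finite class bounded by $m$, to which the three lemmas apply directly with $|\overline\F| \le |\F|$.

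From here the argument is the aggregation step of Theorem~\ref{thm:aggregalgo} verbatim. Set $k = \lceil 8 \log(2|\overline\F|^2/\delta) \rceil$ and take $\overline\alpha$ to be a common upper bound on $\alpha(\wh\F,\delta/2)$ and $\alpha(\wh\F_+,\delta/2)$; since $|\wh\F|, |\wh\F_+| \le |\overline\F|^2 \le |\F|^2$, we get $\overline\alpha \lesssim \sqrt{m^2(\log|\F| + \log(1/\delta))/n}$ from~\eqref{eq:alpha-definition}. Using $S_1$, define $\wh\F$ as in step~3 of the estimator (with $\overline\F_\eps$ replaced by $\overline\F$ itself); by Lemma~\ref{lem:excessrisk} applied on $S_1$ with confidence $\delta/2$, on an event $E_1$ of probability $\ge 1 - \delta/2$ we have $\argmin_{f \in \overline\F} R(f) \in \wh\F$ and $R(f) \le \min_{f \in \overline\F} R(f) + 4\overline\alpha \mathcal{D}(\wh\F) + 5\overline\alpha^2$ for all $f \in \wh\F$. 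Then form $\wh\F_+ = (\wh\F + \wh\F)/2$ and let $\wh g$ be the min-max MOM estimator over $\wh\F_+$ computed on $S_2$; Lemma~\ref{lemma:minmax-mom-estimator} on $S_2$ with confidence $\delta/2$ gives an event $E_2$, $\P(E_2) \ge 1 - \delta/2$, on which $R(\wh g) \le R(g^*) + 2\overline\alpha \mathcal{D}(\wh\F)$, where $g^* = \argmin_{g \in \wh\F_+} R(g)$ and $\mathcal{D}(\wh\F_+) = \mathcal{D}(\wh\F)$. Picking $g,h \in \wh\F$ with $\sqrt{\E(g(X)-h(X))^2} \ge \mathcal{D}(\wh\F)/2$ and applying the parallelogram identity exactly as in~\eqref{eq:parallelogram} yields $R(g^*) \le \tfrac12 R(g) + \tfrac12 R(h) - \tfrac1{16}\mathcal{D}(\wh\F)^2$, and combining with the Lemma~\ref{lem:excessrisk} bound on $E_1$ gives, on $E_1 \cap E_2$,
\[
  R(\wh g) - \min_{f \in \overline\F} R(f) \le 6\overline\alpha \mathcal{D}(\wh\F) + 5\overline\alpha^2 - \tfrac1{16}\mathcal{D}(\wh\F)^2 \le 149\,\overline\alpha^2,
\]
where the last inequality maximizes the right-hand side over $\mathcal{D}(\wh\F) \ge 0$. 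Substituting the bound on $\overline\alpha$ and taking a union bound over $E_1, E_2$ completes the proof.

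I do not expect any genuine obstacle here: the work is essentially a bookkeeping exercise showing that the aggregation machinery of Theorem~\ref{thm:aggregalgo} does not rely on the linear/VC structure and that the only place boundedness was used (to invoke the three lemmas) is handled by the preliminary truncation, which is harmless under Assumption~\ref{assumption:y-given-x}. The only point requiring a line of care is the unboundedness of $\F$ — one must verify that truncating to $[-m,m]$ does not increase the risk and does not increase the cardinality, both of which are immediate. One should also note that if $k > n$ (i.e.\ $n \lesssim \log|\F| + \log(1/\delta)$) the estimator returns $0$, whose risk is at most $\E Y^2 \le m^2$, which trivially satisfies the claimed bound in that regime.
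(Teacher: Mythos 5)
Your proposal is correct and follows essentially the same route as the paper: the paper's proof of Theorem~\ref{thm:finitedict} likewise keeps the truncation step, drops the $\varepsilon$-net discretization, resets $k$ and $\alpha$ to scale with $\log(|\F|/\delta)$, and then invokes the aggregation step of the proof of Theorem~\ref{thm:aggregalgo} verbatim. Your explicit remarks on why truncation does not increase the risk and on the $k>n$ corner case are exactly the points the paper handles (in the discussion preceding Lemma~\ref{lem:lonedist} and in the proof of Theorem~\ref{thm:aggregalgo}, respectively).
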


\begin{proof}
  The aggregation algorithm is the same as the estimator of
  Theorem~\ref{thm:aggregalgo} with only two
  differences. First, we 
  skip the step with $\varepsilon$-net
  discretization of the truncated class $\overline{\F}$.
  The second difference is that the number of blocks in median-of-means
  estimators is of order $\log(|\F|/\delta)$ and similarly, the parameter $\alpha$ is
  redefined to be of order $\sqrt{\frac{m^{2}(\log |\F| + \log (1/\delta))}{n}}$.
  The proof follows via the ``aggregation step'' part of the proof of
  Theorem~\ref{thm:aggregalgo}.
\end{proof}

Concerning aggregation with a heavy-tailed response variable, the above result can be compared with the bounds
of Audibert \cite{audibert2009fastrates} and Juditsky, Rigollet and Tsybakov
\cite{juditsky2008learning}.
Assuming that the functions in $\F$ are absolutely
bounded by $1$, and that $\E |Y|^{s} \le m_s$ for some $s \ge 2, m_s > 0$, they
prove an in-expectation bound on $\E R(\widetilde f) - \min_{f \in \F}R(f)$ for
some estimator $\widetilde f$ with the rate of convergence slower than $1/n$.
In contrast, in Theorem \ref{thm:finitedict} we do not assume the boundedness
of $\F$, but require that the conditional second moment of $Y$ is bounded. As a
result, we provide a deviation bound with the  $1/n$ rate of convergence and
logarithmic dependence on the confidence parameter $\delta$. We emphasize again that due to the necessity of improperness for optimal model selection
aggregation, in-expectation results are not easily transferable to deviation
bounds;
the in-expectation guarantees of \cite{juditsky2008learning,audibert2009fastrates} are in fact obtained for variants of the progressive mixture or mirror averaging rule, which is shown by Audibert~\cite{audibert2008deviation} to exhibit suboptimal deviations.
Finally, an argument of Section~\ref{sec:statistical-lower-bound} shows the necessity of Assumption \ref{assumption:y-given-x} in our distribution-free setting for model selection aggregation.

Further extensions of Theorem~\ref{thm:aggregalgo}, particularly, going beyond
VC-subgraph classes present technical challenges. First, obtaining
distribution-free empirical covering number guarantees for truncations of general classes
(as done for $\overline{\F}$ in our case) might be a non-trivial task.
Second, it is well-known (see the discussion in
\cite{rakhlin2017empirical}) that even when only bounded functions are
considered, replacing the original function class by its empirical $\eps$-net
(as done via the function class $\overline{\F}_{\varepsilon}$ in our algorithm)
usually renders the recovery of the correct excess risk rates impossible.
This in turn leads to the final and the most technical problem: if
$\overline{\F}$ is not replaced by $\overline{\F}_{\varepsilon}$, there are no
known ways to obtain an analog of the concentration Lemma~\ref{lem:concent},
while only imposing Assumption~\ref{assumption:y-given-x}.

To expand on the last point, an analog of Lemma~\ref{lem:concent} for general
classes can be approached via the analysis of suprema of localized quadratic and
multiplier processes (see \cite{mendelson2019unrestricted} for related arguments);
specifically, the supremum of the localized process
$\E\sup_{h \in \H_r}\left(\sum_{i = 1}^n\varepsilon_i Y_ih(X_i)\right)$
is difficult to control for general classes under our
assumptions (here $\H_{r}$ denotes localized subsets of the class $\overline{\F} -
\overline{\F}$, see the proof of Lemma~\ref{lem:lonedist} for more details). However, even if the response variable $Y$ is
independent of $X$, the standard in this context application of the multiplier inequality \cite[Lemma
  2.9.1]{van1996weak} introduces the dependence on the moment $\|Y\|_{2,1} =
\int_{0}^{\infty}\sqrt{\P(|Y| > t)} \di t$ in the resulting bounds, instead of the desired moment $\E
Y^2$, as we obtain in Lemma~\ref{lem:concent} for finite classes. It is known that the
dependence on the $\|\cdot\|_{2,1}$ norm is unavoidable in some
cases \cite{ledoux1986conditions}. More importantly, we refer to the recent work
\cite{han2019convergence} discussing that the multiplier inequality can lead to
suboptimal rates (see \cite[Section 2.3.1]{han2019convergence} for more details).

\section{Statistical lower bounds and
  the necessity
  of Assumption~\ref{assumption:y-given-x}}
\label{sec:statistical-lower-bound}

The statistical guarantees obtained in the previous sections hold under no assumptions on the distribution of $X$ and under Assumption~\ref{assumption:y-given-x} on the conditional distribution of $Y$ given $X$. In this section, we show that Assumption~\ref{assumption:y-given-x} is necessary to obtain non-trivial guarantees on the excess risk without restrictions on $P_X$ and that our risk bounds are unimprovable, in a precise sense.

\begin{proposition}
  \label{prop:statistical-lower-bound}
  Fix any $n \geq 1$, $\delta \in (e^{-n}, 1)$ and any measurable function $f : \R \to \R$ satisfying $f (0) = 0$ and $\sup_{x \in \R} f (x)^2 \geq 1$.
  Then, there exists a distribution $P_X$ of $X$ such that for any estimator $\wh g$ (possibly improper and $P_X$-dependent), setting $Y = \freg (X)$ (where $\freg \in \{f, -f\})$ the following three conditions hold:
  \begin{itemize}
  \item there exists $w^* \in \R$ such that $R (g_{w^*}) = 0$;
  \item $\E [Y^2] \leq 1$;
  \item denoting $\| \freg \|_\infty = \sup_{x \in \R} |\freg (x)| = \| f \|_{\infty} \in [1, + \infty]$ we have
    \begin{equation}
      \P \bigg( R (\wh g) \geq \min \bigg( \frac{\| \freg \|_\infty^2 \cdot \log (1/\delta)}{4 n}, {1} \bigg) \bigg)
      \geq \delta.
    \end{equation}
  \end{itemize}
\end{proposition}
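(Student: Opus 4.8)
The plan is to reduce the claim to the indistinguishability of the two regression functions $f$ and $-f$ on a sample that, with probability at least $\delta$, contains no informative observation. I would take $P_X$ to be a two-point distribution: $X = 0$ with probability $1-p$ and $X = x_0$ with probability $p$, for a point $x_0$ with $f(x_0) \ne 0$ and a small $p$ to be fixed below. Since $f(0) = 0$, on the atom $\{0\}$ the response $Y = \freg(X)$ is identically $0$ whether $\freg = f$ or $\freg = -f$, so the only sample point that can reveal the sign of $\freg$ is a draw equal to $x_0$. Because a learning procedure is a deterministic function of the sample, on the event $E = \{X_1 = \dots = X_n = 0\}$ the estimator returns a fixed function $\wh g(s_0)$, where $s_0 = ((0,0))_{i=1}^n$; an adversary is then free to choose the sign of $\freg$ after inspecting $a := \wh g(s_0)(x_0)$, taking $\freg(x_0)$ to have sign opposite to $a$, which forces $(a - \freg(x_0))^2 \ge f(x_0)^2$ and hence $R(\wh g) \ge p\, f(x_0)^2$ on $E$. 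Knowing $P_X$ is of no help to the estimator, since $P_X$ carries no information about the sign of $\freg$.

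The bookkeeping is to pick $x_0$ and $p$ so that, simultaneously, $p\, f(x_0)^2$ equals the target value $T := \min\!\big(\|f\|_\infty^2 \log(1/\delta)/(4n),\, 1\big)$, the second moment $\E Y^2 = p\,f(x_0)^2$ is at most $1$, and $(1-p)^n \ge \delta$. Writing $B = \|f\|_\infty \in [1,\infty]$, I would choose $x_0$ with $f(x_0)^2 \ge \min(B^2/2,\, 2n/\log(1/\delta))$ — such a point exists by definition of the supremum, the right-hand side being finite even when $B = \infty$ — and set $p = T/f(x_0)^2$. Then $p\,f(x_0)^2 = T \le 1$ by construction; a short computation (distinguishing the two terms in the $\min$ defining $T$) shows $T/\min(B^2/2, 2n/\log(1/\delta)) = \log(1/\delta)/(2n)$, hence $p \le \log(1/\delta)/(2n) < 1/2$, the last step using $\delta > e^{-n}$; and the elementary bound $-\log(1-p) \le p/(1-p) \le 2p$ for $p \in [0,1/2)$ then gives $(1-p)^n \ge e^{-2pn} \ge \delta$. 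Finally $f(x_0)^2 > 0$ forces $x_0 \ne 0$.

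With $P_X$ fixed, the first two bullets hold for either choice of $\freg \in \{f,-f\}$: taking $w^* = \freg(x_0)/x_0$ makes $g_{w^*}$ coincide with $\freg$ at both $0$ and $x_0$, so $R(g_{w^*}) = 0$; and $\E Y^2 = \E f(X)^2 = p\,f(x_0)^2 = T \le 1$. For the third bullet, given an arbitrary (deterministic, possibly $P_X$-dependent) estimator $\wh g$, select $\freg$ as above so that $(a - \freg(x_0))^2 \ge f(x_0)^2$; under this $\freg$, the event $E$ has probability $(1-p)^n \ge \delta$, and on $E$ the sample equals $s_0$, so $R(\wh g) \ge p\,(\wh g(x_0) - \freg(x_0))^2 = p\,(a - \freg(x_0))^2 \ge p\,f(x_0)^2 = T$. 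Hence $E \subseteq \{R(\wh g) \ge T\}$, i.e.\ $\P(R(\wh g) \ge T) \ge \delta$; since $\|\freg\|_\infty = \|f\|_\infty$, this is the asserted inequality.

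I do not expect a genuine obstacle: the argument is purely combinatorial and needs no concentration estimates, the ``failure'' being merely that no informative sample point was observed, and the two-point design is the natural one here because the constraint $\freg \in \{f,-f\}$ makes any single nonzero covariate reveal $\freg$ in full. The only point requiring some care is the uniform choice of $x_0$ and $p$ across the regimes where the $\min$ in $T$ is attained by either term, where $\|f\|_\infty$ is infinite, or where $\sup_x f(x)^2$ is not attained; the single choice $f(x_0)^2 \ge \min(B^2/2,\, 2n/\log(1/\delta))$ takes care of all of them.
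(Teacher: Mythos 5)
Your proposal is correct and follows essentially the same route as the paper's proof: a two-point design with an atom at $0$ and a rare atom at $x_0$, the event that all $n$ covariates land at $0$, and an adversarial choice of the sign of $\freg$ opposite to $\wh g(s_0)(x_0)$. The only difference is cosmetic bookkeeping — you set $p = T/f(x_0)^2$ directly and verify $(1-p)^n \geq \delta$ via $-\log(1-p)\leq 2p$, whereas the paper defines $p$ by $(1-p)^n = \delta$ and then caps it as $p_0 = \min(p, 1/f(x_0)^2)$ — and both yield the same constants.
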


Before providing the proof, let us comment on the implications of this lower bound.
First, note that if the conditional second moment bound $\E [Y^2 | X ] \leq 1$ of Assumption~\ref{assumption:y-given-x} is relaxed to the weaker unconditional bound $\E Y^2 \leq 1$,
then (taking $\delta = 0.9$, and any $f$ such that $\| f \|_\infty \geq \sqrt{n}$) the worst-case excess risk of any estimator $\wh g$ is lower-bounded by an absolute constant $c$ with probability $0.9$, matching up to constants the risk of at most $1$ trivially achieved by the identically zero function.
Second, without Assumption~\ref{assumption:y-given-x} our upper bounds cannot be improved even in the ``realizable'' case where
the linear class $\Flin$
contains a perfect predictor
(that is, when $R (g) = 0$ for some $g \in \Flin$), and in particular $\var (Y|X) = 0$ almost surely.
As a result, the quantity $\sup_{x \in \R^d} \E [Y^2|X=x]$ in our assumption cannot be replaced by $\sup_{x \in \R^d} \var (Y|X=x)$.
Finally, when $Y = \freg (X)$, then the worst-case dependence on $\freg$ can be no better than $\| \freg \|_{\infty}^{2}$, as shown in the last part of the above proposition. The dependence on $m^{2}$ in our upper bounds is thus unavoidable, recalling that $m^{2} \leq \norm{\freg}_{\infty}^{2}$ whenever $Y = \freg(X)$.

We point out that the same argument as in Proposition~\ref{prop:statistical-lower-bound} shows that a dependence on $\sup_{x \in \R^d} \E [Y^2|X=x]$ is unavoidable for any conditional distribution $(P_{Y|X=x})_{x \in \R^d}$ (possibly known up to its sign), beyond the case $Y = \freg (X)$. We considered the latter special case for simplicity, and because it allows to simultaneously impose that $R (g_{w^*}) = 0$ for some $w^* \in \R^d$.
We also remark that Proposition~\ref{prop:statistical-lower-bound} is stated in dimension $d=1$ for simplicity. 
The same lower bound construction can be used for general dimension $d$ (assuming, for example, that $f$ is continuous, and imposing $|\freg| \leq |f|$), allowing one to replace the $\log (1/\delta)$ term by $d + \log (1/\delta)$.

\begin{proof}
  Let $p \in (0, 1)$ be such that $(1 - p)^n = \delta$;
  using that $1 - e^{-u} \geq (1-e^{-1}) u \geq u/2$ for $u = \log (1/\delta)/ n \in [0,1]$, we have
  \begin{equation}
    \label{eq:proof-p-bound}
    p
    = 1 - \delta^{1/n}
    \geq \frac{\log (1/\delta)}{2 n}
    \, .
  \end{equation}
  Let $x_0 \in \R \setminus \{ 0 \}$ be such that $| f(x_0) |$ is larger than $\min (\| f \|_{\infty} / \sqrt{2}, 1/\sqrt{p})$ and let $p_0 = \min (p, 1/f(x_0)^2)$. Fix the distribution of the covariates $P_X$ as follows:
  \[
      X = \begin{cases}
        0 & \text{with probability } 1 - p_0, \\
        x_0 & \text{with probability } p_0.
      \end{cases}
  \]
  Up to replacing $f$ by $-f$, assume that $f(x_0) > 0$.
  For $\eps \in \{-1, 1\}$, let $P_\eps$ denote the joint distribution of the random pair $(X, \eps f (X))$ (where the marginal distribution of $X$ is given by $P_X$ defined above), and let $R_\eps$ denote the  risk functional associated to the distribution $P_\eps$. Note that $P_\eps$ satisfies the first condition of the proposition with $w^* = \eps f(x_0) / x_0$. Also, the second condition holds since $\E Y^2 = p_0 f(x_0)^2 \leq 1$.

  We now turn to proving the third condition of this proposition.
  Let $\wh g$ be an arbitrary procedure, possibly improper and depending on $P_X$. Let $S_0 = \big( (0, 0), \dots, (0, 0) \big)$ denote a sample of $n$ points equal to $(0,0)$.  Since the quadratic loss function is convex, we may assume without loss of generality that $\wh g$ is a deterministic procedure and let $g : \R \to \R$ denote the output of $\wh g$ on the sample $S_0$, that is, $g = \widehat{g}(S_0)$. By symmetry of the problem, assume that $g(x_0) \leq 0$ and fix the distribution $P$ of $(X,Y)$ to $P_1$ (if $g(x_0) \geq 0$, we may fix $P = P_{-1}$ instead).
  Consider the event $E = \{ X_1 = \dots = X_n = 0 \}$ and note that $\P (E) = (1 - p_0)^2 \geq (1-p)^n = \delta$. Since $f(0) = 0$, on the event $E$ the observed sample is $S_0$, so that by~\eqref{eq:proof-p-bound} we have
  \begin{align}
    R (\wh g)
    &\geq \E [ (g (X) - Y)^2 \indic{X=x_0} ]= p_0 \cdot (g(x_0) - f(x_0))^2 \geq p_0 \, f(x_0)^2
    \\
    &= \min \big( p \, f(x_0)^2, 1 \big) \geq \min \Big( \frac{p \| f \|_\infty^2}{2}, 1 \Big) \geq \min \bigg( \frac{\| \freg \|_\infty^2 \cdot \log (1/\delta)}{4 n}, {1} \bigg),
  \end{align}
  which completes our proof.
\end{proof}

\section{Deferred Proofs}
\label{section:deferred-proof}
This section contains the proof of lemmas appearing in
Section~\ref{sec:optimal-estimator}. Note that rescaling the response
$Y$ by $1/m$ affects the excess risk by a multiplicative factor
$1/m^{2}$.
Thus, without loss of generality, in all the proofs of this section
we may assume that Assumption~\ref{assumption:y-given-x} holds with $m=1$.

\subsection{Proof of Lemma~\ref{lem:lonedist}}
  The proof of this lemma is based on a combination of the classical
  localization via empirical Rademacher complexities argument of \cite{bartlett2005local}
  and the covering number bounds for truncated VC-subgraph classes due to
  \cite{gyorfi2002nonparametric}.

  First, define the star-hull of
  $|\overline{\F} - \overline{\F}| = \{|f - g| : f,g \in \overline{\F}\}$
  by $\H$, and for $r \geq 0$, define its localized subsets by $\H_{r}$:
  \begin{equation}
    \label{eq:localized-star-hull}
    \H = \Bigl\{\beta|f-g| : \beta \in [0,1],\enskip f,g \in
    \overline{\F}\Bigr\},\quad
    \H_{r} = \Bigl\{h \in \H :
    \enskip \frac{1}{n}\sum_{i=1}^{n}|h(X_{i})|^{2} \leq 4r \Bigr\}.
  \end{equation}
  Let $\widehat{\psi}_{n}(r) : [0,\infty) \to \R$ denote any sub-root
  function with unique positive
  fixed-point $\widehat{r}^{*}$ (that is, a positive solution to the equation
  $\widehat{\psi}_{n}(\widehat{r}^{*}) = \widehat{r}^{*}$ (see \cite[Definition
  3.1, Lemma 3.2]{bartlett2005local}). Suppose that
  $\widehat{\psi}_{n}$ satisfies
  the following inequality for any $r \geq \widehat{r}^{*}$:
  \begin{equation}
    \label{eq:sub-root-upper-bound}
    \frac{1}{n}\E_{\varepsilon_1, \ldots, \varepsilon_n}
    \sup\limits_{h \in \H_r}\left(
      \sum_{i = 1}^n\varepsilon_i h(X_i)
    \right)
    + \frac{\log(3/\delta)}{n}
    \lesssim
    \widehat{\psi}_{n}(r),
  \end{equation}
  where $\varepsilon_{1}, \dots, \varepsilon_{n}$ is a sequence of \iid
  Rademacher random variables. Notice that for any $r \geq 0$ and any
  $h \in H_{r}$ we have $\sup_{x} |h(x)| \leq 2$ and $\E h(X)^{2} \leq 4 \E
  h(X)$. Hence, by the first part of \cite[Theorem 4.1]{bartlett2005local},
  with probability at least $1 - \delta$, the following holds simultaneously
  for all $f,g \in \overline{\F}$:
  \begin{equation}
    \label{eq:localization-thm-bbm}
    \E|f(X) - g(X)|
    \leq
    \frac{2}{n} \sum_{i=1}^{n} |f(X_{i}) - g(X_{i})|
    +c\left(
      \widehat{r}^{*} + \frac{\log(3/\delta)}{n}
    \right),
  \end{equation}
  where $c > 0$ is some universal constant.

  In the rest of the proof we show that a suitable value of $\widehat{r}^{*}$
  can be obtained by upper bounding the empirical Rademacher complexity terms
  via Dudley's entropy integral. To do so, we first need to obtain an upper
  bound on the covering numbers of the class $\H$ with
  respect to the empirical $L_{2}$ distance, defined between any $h,h' \in
  \mathcal{H}$ by $\sqrt{\frac{1}{n}\sum_{i=1}^{n} (h(X_{i}) -
  h'(X_{i}))^{2}}$. In what follows, for any class $\mathcal{G}$ and any $\gamma
  > 0$, an empirical $L_{2}$ $\gamma$-net of $\mathcal{G}$ will be denoted by
  $N(\mathcal{G}, \gamma) \subseteq \mathcal{G}$. Thus, the covering number of
  $\G$ with respect to the empirical $L_{2}$ distance at scale $\gamma$ is at most $|N(\mathcal{G}, \gamma)|$.

  Since $\H$ is a star-hull of the class $|\overline{\F} - \overline{\F}|$,
  it follows from \cite[Lemma 4.5]{mendelson2002improving} that for any $\gamma >
  0$ we have
  \begin{equation}
    \label{eq:star-hull-covering-numbers}
    |N(\H, \gamma)| \leq \left|
      N(\overline{\F} - \overline{\F}, \gamma/2)
    \right| \cdot \frac{4}{\gamma}.
  \end{equation}
  Further, noting that the Minkowski sum of $\gamma/4$ covers of
  $\overline{\F}$ forms a $\gamma/2$ cover of $\overline{\F} - \overline{\F}$
  it follows that
  \begin{equation}
    \label{eq:F-F-cover}
    |N(\overline{\F} - \overline{\F}, \gamma/2)|
    \leq
    |N(\overline{\F}, \gamma/4)|^{2}.
  \end{equation}
  Let $\overline{\F}_{+} = \{x \mapsto \max(0, f(X)) : f \in \overline{\F}\}$
  and $\overline{\F}_{-} = \{x \mapsto \min(0, f(X)) : f \in \overline{\F}\}$.
  By the same argument, it holds that
  \begin{equation}
    \label{eq:F-plus-minus-cover}
    |N(\overline{\F}, \gamma/4)|
    \leq |N(\overline{\F}_{+}, \gamma/8)| \cdot |N(\overline{\F}_{-},
    \gamma/8)|.
  \end{equation}
  Finally, plugging in the upper bounds on the covering numbers of
  $\overline{\F}_{+}$ and $\overline{\F}_{-}$ due to \cite[Theorem 9.4,
  Theorem 9.5]{gyorfi2002nonparametric}\footnote{
    See also the proof of \cite[Theorem 11.3]{gyorfi2002nonparametric} where the same bound on
    covering numbers is used.
  }, the chain of inequalities
  \eqref{eq:star-hull-covering-numbers}, \eqref{eq:F-F-cover} and
  \eqref{eq:F-plus-minus-cover} yields
  \begin{equation}
    \label{eq:H-final-covering-bound}
    \log |N(\H, \gamma)| \lesssim d \log(e/\gamma).
  \end{equation}

  Plugging in the above inequality into Dudley's
  entropy integral \cite[Theorem
  3.5.1]{gine2016mathematical} upper bound on Rademacher complexities, we obtain
  \begin{align}
  \label{eq:dudleyintegral}
    \frac{1}{n}\E_{\varepsilon_1, \ldots, \varepsilon_n}\sup\limits_{h \in \H_r}
    \left(\sum_{i = 1}^n\varepsilon_i h(X_i)\right) \nonumber
    &\lesssim \frac{1}{\sqrt{n}}
      \int_{0}^{2\sqrt{r}}\sqrt{d\log(e/\gamma)}d\gamma
    \\
    &\lesssim \sqrt{\frac{d}{n}}\sqrt{r\log (e/r)}\left(\mathbb{1}_{\{r \geq d/n \}}
      + \mathbb{1}_{\{r < d/n\}}\right)
    \\
    &\lesssim \sqrt{\frac{dr\log(n/d)}{n}}
    + \frac{d\sqrt{\log(n/d)}}{n}.
  \end{align}
  In particular, the inequality \eqref{eq:sub-root-upper-bound} is satisfied by
  the choice:
  \begin{equation}
    \label{eq:sub-root-choice}
    \widehat{\psi}_{n}(r) = c\left(
    \sqrt{\frac{dr\log(n/d)}{n}}
    + \frac{d\sqrt{\log(n/d)} + \log(3/\delta)}{n}
    \right).
  \end{equation}
  Solving the fixed-point equation $\widehat{\psi}_{n}(\widehat{r}^*) =
  \widehat{r}^*$
  yields $\widehat{r}^* \lesssim \frac{d\log(n/d) + \log(1/\delta)}{n}$.
  The claim follows by the localization theorem stated in
  \eqref{eq:localization-thm-bbm}.

\subsection{Proof of Lemma~\ref{lem:concent}}
  Fix any $f,g \in \mathcal{G}$ and recall that $\E (\ell_f - \ell_g) = R(f) - R(g)$.
  By the standard bound \cite[Theorem 2]{lugosi2019mean}, for any $\delta' \in (0,1)$, the
  choice $k(\delta') = \lceil 8\log(1/\delta') \rceil$ guarantees
  that with probability at least $1 - \delta'$ we have
  \begin{equation}
    \label{eq:median-of-means-bound-non-uniform}
    \left|R(f) - R(g) - \MOM^{k(\delta')}_{S_{n}}(\ell_{f} - \ell_{g})\right|
    \leq \sqrt{\frac{32 \mathrm{Var}(\ell_{f} - \ell_{g}) \log(1/\delta')}{n}}.
  \end{equation}
  To upper bound the variance term, first notice that
  \begin{equation}
    \ell_{f}(X, Y) - \ell_{g}(X, Y) = 2Y(g(X) - f(X)) + f(X)^2 - g(X)^2.
  \end{equation}
  Combining the above identity with the inequality $(a+b)^2 \leq 2a^2 + 2b^2$
  for any $a,b$, Assumption~\ref{assumption:y-given-x} (with $m=1$)
  and the boundedness of $f,g$, we obtain
  \begin{align}
    \Var(\ell_{f} - \ell_{g})
    &\leq 8 \E Y^2(g(X) - f(X))^2 + 2\E(f(X)^2 - g(X)^2)^2 \\
    &\leq 8 \E (g(X) - f(X))^2 + 2\E(f(X) - g(X))^{2}(f(X) + g(X))^{2} \\
    &\leq 16 \E (g(X) - f(X))^{2}.
  \end{align}
  Since the class $\G$ is finite, taking $\delta' = \delta/(2|\G|^{2})$
  the upper bound \eqref{eq:median-of-means-bound-non-uniform} extend uniformly
  to all pairs $f,g\in \G$, with probability at least $1- \delta/2$. In
  particular, for any $f,g \in \G$ it holds that
  \begin{align}
    \left|R(f) - R(g) - \MOM^{k(\delta')}_{S_{n}}(\ell_{f} - \ell_{g})\right|
    &\leq \sqrt{\frac{512 \E(g(X) - f(X))^{2} \cdot (2\log(|\G|) +
    \log(2/\delta))}{n}} \\
    &\leq \alpha \sqrt{\E(g(X) - f(X))^{2}}.
    \label{eq:lemma-conecntration-first-ineq}
  \end{align}
  This completes the proof of the first inequality.

  We will now simultaneously prove the second and the third inequalities
  appearing in the statement of this lemma.
  Note that $m=1$ ensures that for any $f,g \in \G$ we have $(f(X) - g(X))^{2} \leq
  4$ and $\E(f(X) - g(X))^{4} \leq 4\E(f(X) - g(X))^{2}$.
  Hence, for any $\delta'' \in (0,1)$ and any $f,g \in \G$, Bernstein's
  inequality ensures that with probability at least $1 - 2\delta''$ it holds
  simultaneously that
  \begin{align}
    \label{eq:population-two-times-empirical}
    &\E(f(X) - g(X))^{2}
    \leq \frac{2}{n}\sum_{i=1}^{n}(f(X)-g(X))^{2} +
    \frac{12\log(1/\delta'')}{n}, \\
    &\frac{1}{n}\sum_{i=1}^{n}(f(X_{i}) - g(X_{i}))^{2}
    \leq 2\E(f(X) - g(X))^{2} + \frac{12\log(1/\delta'')}{n}.
    \label{eq:empirical-two-times-population}
  \end{align}
  Setting $\delta'' = \delta/(4|\G|^{2})$
  the above inequalities extend uniformly to all pairs $f,g \in \G$ with
  probability at least $1-\delta/2$.
  Noting that $\frac{12\log(1/\delta'')}{n} \leq \alpha^{2}$,
  the inequality \eqref{eq:empirical-two-times-population} completes the proof
  of the third inequality of this lemma. Finally, the second inequality
  appearing in the statement of this lemma is implied (on the
  event of the first and third inequalities) by plugging in
  \eqref{eq:population-two-times-empirical} into
  \eqref{eq:lemma-conecntration-first-ineq} together with the inequality
  $\sqrt{a+b} \leq \sqrt{a} + \sqrt{b}$ valid for any $a,b \geq 0$. The proof
  of this lemma is thus complete.

\subsection{Proof of Lemma~\ref{lem:excessrisk}}
  Let $E$ denote the event of Lemma~\ref{lem:concent} (thus, $\P(E) \geq 1 -
  \delta$).
  By the definition of $g^{*}$, for any $g \in \G$ we have $R(g^{*}) - R(g)
  \leq 0$. Hence, on the event $E$ it holds simultaneously for all $g \in \G$ that
  \begin{align}
    \MOM_{S_{n}}^{k}(\ell_{g^*} - \ell_{g})
    &\le R(g^*) - R(g) + |R(g^*) - R(g) - \MOM_{S_{n}}^{k}(\ell_{g^*} - \ell_{g})|
    \\
    &\le  \sqrt{2}\alpha\sqrt{\frac{1}{n}\sum\limits_{i = 1}^n(g^{*}(X_i) - g(X_i))^2}
    + \alpha^2.
  \end{align}
  In particular, on the event $E$ the function $g^{*} \in \wh \G$,
  which completes the first part of the proof.

  We now turn to proving the second part of this lemma. Since $g^{*} \in \wh
  \G$, by the definition of $\wh \G$, for any $g \in \wh \G$ we have
  \begin{equation}
    \MOM_{S_{n}}^{k}(\ell_g - \ell_{g^*})
    \leq
    \sqrt{2}\alpha\sqrt{\frac{1}{n}\sum\limits_{i = 1}^n(g(X_i) -
    g^*(X_i))^2} + \alpha^2.
  \end{equation}
  Hence, on the event $E$, by the third inequality of
  Lemma~\ref{lem:concent}, for any $g \in \wh \G$ it holds that
  \begin{align*}
    R(g) - R(g^*)
    &\le \left|R(g) - R(g^*) - \MOM_{S_{n}}^{k}(\ell_g - \ell_{g^*})\right| +
    \MOM_{S_{n}}^{k}(\ell_g - \ell_{g^*})
    \\
    &\leq 2\sqrt{2}\alpha\sqrt{\frac{1}{n}\sum\limits_{i = 1}^n(g(X_i) - g^*(X_i))^2}
    + 2\alpha^2
    \\
    &\leq 4\alpha\sqrt{\E (g(X) - g^*(X))^{2}} + 5\alpha^2.
  \end{align*}
  By the definition of the $L_{2}$ diameter of the class $\wh \G$ and by the
  fact that $g^*,g \in \wh \G$, it follows
  that $\sqrt{\E (g(X) - g^*(X))^{2}} \leq \mathcal{D}(\wh \G)$ and hence our
  proof is complete.

\subsection{Proof of Lemma~\ref{lemma:minmax-mom-estimator}}
  First observe that
   \begin{align}
     R(\wh{g})
     &=
     R(g^{*}) + \left(R(\wh{g}) - R(g^*)
     - \MOM_{S_{n}}^{k}(\ell_{\widehat{g}} - \ell_{g^{*}})\right)
     + \MOM_{S_{n}}^{k}(\ell_{\widehat{g}} - \ell_{g^{*}})
     \\
     &\leq
     R(g^{*}) + \sup_{g \in \G}\left|R(g) - R(g^*)
     - \MOM_{S_{n}}^{k}(\ell_{g} - \ell_{g^{*}})\right|
     + \MOM_{S_{n}}^{k}(\ell_{\widehat{g}} - \ell_{g^{*}})
     \\
     &\leq
     R(g^{*}) + \alpha\mathcal{D}(\G)
     + \MOM_{S_{n}}^{k}(\ell_{\widehat{g}} - \ell_{g^{*}}),
     \label{eq:min-max-mom-estimator-intermediate-bound}
   \end{align}
  where the last line follows via an application of Lemma~\ref{lem:concent}.
  Further, notice that by the definition of $\wh g$ we have
  \[
    \MOM_{S_{n}}^{k}(\ell_{\wh{g}} - \ell_{g^*})
    \le \max\limits_{g \in \wh \G}\MOM_{S_{n}}^{k}(\ell_{\wh{g}} - \ell_{g})
    \le \max\limits_{g \in \wh \G}\MOM_{S_{n}}^{k}(\ell_{g^*} - \ell_{g}).
  \]
  At the same time, on the event of Lemma~\ref{lem:concent},
  for all $g \in \G$ we have
  \[
    \MOM_{S_{n}}^{k}(\ell_{g^*} - \ell_{g})
    \le R(g^*) - R(g) + \alpha\mathcal{D}(\G)
    \leq \alpha\mathcal{D}(\G).
  \]
  Combining the above inequality with
  \eqref{eq:min-max-mom-estimator-intermediate-bound} concludes our proof.
  
  \paragraph{Acknowledgements.} We would like to thank Manfred Warmuth for several useful discussions. 
  T.V.~is supported by the EPSRC and MRC through the OxWaSP CDT programme (EP/L016710/1). N.Z.~is funded in part by ETH Foundations of Data Science (ETH-FDS).

{\small	
\bibliography{references}
}
\bibliographystyle{abbrv}

\end{document}